\documentclass[12pt]{article}

\usepackage{
   amsthm,
   amssymb,
   graphicx,
   hyperref,
  mathtools,
  fullpage
   }
\usepackage[shortlabels]{enumitem}
\usepackage{stmaryrd}
\usepackage{tikz-cd}
\usepackage{tikz}

\theoremstyle{plain}
\newtheorem{thm}{Theorem}[section]
\newtheorem{lemma}[thm]{Lemma}
\newtheorem{propn}[thm]{Proposition}
\newtheorem{cor}[thm]{Corollary}
\theoremstyle{definition}
\newtheorem{defn}[thm]{Definition}
\newtheorem{rmk}[thm]{Remark}
\newtheorem{ex}[thm]{Example}
\newtheorem{claim}[thm]{Claim}

\newcommand{\R}{\mathbb{R}}
\newcommand{\C}{\mathbb{C}}
\newcommand{\id}{\mathrm{id}}
\newcommand{\bT}{^bT}
\newcommand{\bd}{{^bd}}
\newcommand{\bdel}{{^b\partial}}
\newcommand{\bdelbar}{{^b{\overline\partial}}}
\newcommand{\X}{\mathfrak{X}}
\newcommand{\bX}{{^b\mathfrak{X}}}
\newcommand{\bO}{{^b\mathcal{O}}}
\renewcommand{\Re}{\operatorname{Re}}

\newcommand{\an}{\rho}
\newcommand{\<}{\langle}
\renewcommand{\>}{\rangle}
\renewcommand{\L}{\mathcal{L}}
\newcommand{\Arg}{\operatorname{Arg}}
\newcommand{\V}{\mathcal{V}}

\numberwithin{equation}{section}

\title{The Newlander-Nirenberg theorem for \\ complex \texorpdfstring{$b$}{b}-manifolds}
\author{
Tatyana Barron and Michael Francis
}
\date{May 25, 2026}

\begin{document}

\maketitle

\begin{abstract}
\noindent Melrose defined the $b$-tangent bundle of a  smooth manifold $M$ with boundary  as the vector bundle  whose sections are vector fields on $M$ tangent to the boundary. Mendoza defined a complex $b$-manifold as a manifold with boundary  together with an involutive splitting of the  complexified $b$-tangent bundle into complex conjugate factors. We prove complex $b$-manifolds have a single local model depending only on dimension.  This can be thought of as the Newlander-Nirenberg theorem for complex $b$-manifolds. Our proof uses Mendoza's result that complex $b$-manifolds have no ``formal local invariants'' and a singular coordinate change to leverage the classical Newlander-Nirenberg theorem and Catlin's generalization for complex manifolds with pseudoconvex boundary.
\end{abstract}

\noindent\textbf{Keywords:} $b$-geometry, complex $b$-manifold, Newlander-Nirenberg theorem

\noindent\textbf{Mathematics Subject Classifications (2010):} 32Q99, 58K50

\section{Introduction}

Throughout, we conflate  $\R^2 = \C$ and use coordinates $z=(x,y)=x+iy$  interchangeably. The  word ``smooth'' means ``infinitely real-differentiable'', not ``holomorphic''. We denote the closed half space in $\R^d$ by $\R^d_+ \coloneqq \R_+ \times \R^{d-1}$  where $\R_+ \coloneqq [0,\infty)$.

Recall that a complex manifold may be equivalently defined either by a holomorphic atlas or by an integrable almost-complex structure. The equivalence of these two definitions is  given by the Newlander-Nirenberg theorem \cite{NN}. In more  detail, a complex structure on a  smooth manifold $M$ is an involutive subbundle $T^{0,1}M$ of the complexified tangent bundle such that $\C TM = T^{1,0}M \oplus T^{0,1}M$ where $T^{1,0}M \coloneqq \overline{T^{0,1}M}$. The Newlander-Nirenberg theorem implies that every point belongs to a coordinate chart  $(x_1,y_1,\ldots,x_n,y_n)=(z_1,\ldots,z_n)$ in which  $T^{0,1}M$ is spanned by 
\begin{align*}
\partial_{\overline z_j} \coloneqq \tfrac{1}{2}(\partial_{x_j} +i \partial_{y_j}) && j=1,\ldots,n.
\end{align*}
Accordingly, $T^{1,0}M$ is spanned by 
\begin{align*}
\partial_{z_j} \coloneqq \tfrac{1}{2}(\partial_{x_j} -i \partial_{y_j}) && j=1,\ldots,n.
\end{align*}
 
Suppose now that $M$ is a smooth manifold with boundary. The  $b$-tangent bundle of $M$ is the vector bundle ${\bT M}$ over $M$ whose smooth sections are the vector fields on $M$ that are tangent along the boundary. These terminologies stem from the $b$-geometry of Melrose \cite{Melrose}, also called log-geometry in algebro-geometric contexts. Mendoza \cite{Mendoza} defined a  complex  $b$-structure   to be an involutive subbundle ${\bT^{0,1}}M$ of the complexified $b$-tangent bundle such that $\C{\bT M} = {\bT^{1,0}}M \oplus {\bT^{0,1}}M$, where ${\bT^{1,0}}M \coloneqq \overline{{\bT^{0,1}}M}$.   Thus,  complex $b$-structures are defined exactly analogously to complex structures, replacing the tangent bundle by the $b$-tangent bundle.

To complete the analogy, one would like there to be a Newlander-Nirenberg type result to the effect  that all complex $b$-structures are locally isomorphic to a standard model depending only on dimension. Such a result would empower one to give a second (equivalent) definition of a complex $b$-manifold in terms of an appropriately defined $b$-holomorphic atlas. We equip $\R^{2n+2}=\C^{n+1}$, $n \geq 0$  with coordinates $(x_0,y_0, \ldots,x_n,y_n)=(z_0,\ldots,z_n)$. 
A logical  candidate  model for a complex $b$-manifold is the $x_0 \geq 0$ half space $M=\R^{2n+2}_+$ with ${\bT^{0,1}}M$
spanned by:
\begin{align*}
\bdel_{\overline z_0} &\coloneqq \tfrac{1}{2}(x_0\partial_{x_0} +i \partial_{y_0}) &&   \\
\partial_{\overline z_j} &\coloneqq \tfrac{1}{2}(\partial_{x_j} +i \partial_{y_j}) && j=1,\ldots,n.
\end{align*}
Accordingly, ${\bT^{1,0}}M$ is spanned by: 
\begin{align*}
\bdel_{z_0} &\coloneqq \tfrac{1}{2}(x_0\partial_{x_0} -i \partial_{y_0}) &&  \\
\partial_{z_j} &\coloneqq \tfrac{1}{2}(\partial_{x_j} -i \partial_{y_j}) && j=1,\ldots,n.
\end{align*}

Our goal in this article is to show that, locally near points on the boundary, every complex $b$-manifold of dimension $2n+2$ is indeed isomorphic to the model example above. Mendoza \cite{Mendoza} already took up this problem and partially settled it, up to deformation by terms whose Taylor expansions vanish along the boundary. Indeed, Mendoza's result, which we  state below, will play a crucial role.

\begin{thm}[\cite{Mendoza}, Proposition~5.1]\label{Mendoza}
Let $M$ be a complex $b$-manifold of real dimension $2n+2$, $n \geq 0$. Then, every $p \in \partial M$ has an open neighbourhood $U \subseteq M$ on which there are smooth local coordinates $(x_0,y_0, \ldots,x_n,y_n)=(z_0,\ldots,z_n)$ centered at $p$ with $x_0$ vanishing on $\partial M$ and positive on the interior;
and smooth complex vector fields $\Gamma_0, \ldots,\Gamma_n$ on $U$ vanishing to infinite order on $\partial M$ such that
\begin{align*}
L_0  &\coloneqq \bdel_{\overline z_0} + \Gamma_0 \\
L_j &\coloneqq \partial_{\overline z_j} + \Gamma_j && j=1,\ldots,n 
\end{align*}
is a frame for ${\bT^{0,1}}M$ over $U$. Moreover, each of $\Gamma_0,\ldots,\Gamma_n$ is a linear combination of $\bdel_{z_0}, \partial_{z_1},\ldots,\partial_{z_n}$ whose coefficients are smooth complex-valued functions on $U$ vanishing to infinite order on $\partial M$. 
\end{thm}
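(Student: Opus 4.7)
The plan is to produce the desired coordinates by a formal Taylor expansion along $Z$, order by order, and then invoke Borel's lemma to realize the formal series as honest smooth coordinates.

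To begin, I pick any smooth coordinates $(x_0,y_0,\ldots,x_n,y_n)$ centered at $p$ with $Z=\{x_0=0\}$; the $b$-tangent bundle is then locally framed by $x_0\partial_{x_0},\partial_{y_0},\partial_{x_1},\partial_{y_1},\ldots,\partial_{x_n},\partial_{y_n}$, and the complexification $\C{\bT M}$ by $\bdel_{z_0},\bdel_{\overline z_0},\partial_{z_j},\partial_{\overline z_j}$. The group of real linear changes of coordinates preserving $\{x_0=0\}$ acts transitively on those $(n+1)$-dimensional complex subspaces of $\C{\bT_p M}$ that are transverse to their conjugates, so after such a change I may assume ${\bT^{0,1}}M|_p=\operatorname{span}_{\C}\{\bdel_{\overline z_0}|_p,\partial_{\overline z_1}|_p,\ldots,\partial_{\overline z_n}|_p\}$. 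Shrinking $U$, a pointwise linear change of basis among any smooth local frame then produces one of the form $L_0=\bdel_{\overline z_0}+\Gamma_0$, $L_j=\partial_{\overline z_j}+\Gamma_j$, where each $\Gamma_j$ is a $\C$-linear combination of $\bdel_{z_0},\partial_{z_1},\ldots,\partial_{z_n}$ whose coefficients vanish at $p$.

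The core of the proof is an induction on the order of vanishing $m$ of $\Gamma_0,\ldots,\Gamma_n$ along $Z$. Suppose after some prior adjustments each $\Gamma_j$ vanishes to order $m$, and write $\Gamma_j=x_0^m\,\Gamma_j^{(m)}+O(x_0^{m+1})$. Extracting the coefficient of $x_0^m$ in the involutivity conditions $[L_j,L_k]\in\operatorname{span}_{C^\infty(U)}(L_0,\ldots,L_n)$ yields a linear system on the restrictions $\Gamma_j^{(m)}|_Z$ which, after suitable reorganization, is the closedness condition for a vector-valued Dolbeault form on the local polydisc $Z\cap U$ (thought of as $\R_{y_0}\times\C^n_{(z_1,\ldots,z_n)}$). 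Local exactness of $\overline\partial$ on polydiscs then supplies smooth functions $\psi_0,\ldots,\psi_n$ on $Z$ whose smooth extensions $\tilde\psi_j$ to $U$ define a coordinate change $z_j\mapsto z_j+x_0^{m+1}\tilde\psi_j$ that raises the order of vanishing of each $\Gamma_j$ to $m+1$, at the cost of an invertible $C^\infty(U)$ change of basis among the $L_j$'s.

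Iterating accumulates a formal power series in $x_0$ specifying the cumulative coordinate change. Borel's lemma provides honest smooth functions realizing this formal series, and in the resulting coordinates the $\Gamma_j$'s vanish to infinite order along $Z$, completing the proof. The principal obstacle is the cohomological vanishing in the inductive step: one must carefully track how the Lie brackets $[L_0,L_k]$ and $[L_j,L_k]$ expand in powers of $x_0$ --- keeping separate the ``radial'' vector field $\bdel_{z_0}$, built from $x_0\partial_{x_0}$, from the honest $\partial_{z_j}$'s --- and verify that the order-$m$ obstruction really is a closed cocycle whose cohomology class vanishes on a polydisc. This is in essence the formal Newlander--Nirenberg statement, adapted to the $b$-setting.
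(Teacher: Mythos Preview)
The paper does not give its own proof of this statement: Theorem~\ref{Mendoza} is quoted verbatim as Proposition~5.1 of Mendoza's paper and used as a black box throughout. So there is no proof in the present paper to compare your proposal against.

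As for the proposal on its own merits: the architecture (normalize at a point, induct on the order of vanishing along $Z$, sum the resulting formal series with Borel) is the standard and correct shape for a formal normal form result, and is indeed the shape of Mendoza's argument. But the inductive step as you state it has a genuine gap. You describe the order-$m$ obstruction as a closed vector-valued Dolbeault form on ``the local polydisc $Z\cap U$'' and invoke local $\overline\partial$-exactness. However, $Z$ is $(2n+1)$-dimensional---it is $\R_{y_0}\times\C^n_{(z_1,\ldots,z_n)}$, not a complex polydisc---so there is no $\overline\partial$-complex on $Z$ in the ordinary sense. Concretely, when you extract the $x_0^m$-coefficient of $[L_0,L_k]$, the operator coming from $\bdel_{\overline z_0}=\tfrac{1}{2}(x_0\partial_{x_0}+i\partial_{y_0})$ acting on $x_0^m\gamma$ contributes $\tfrac{1}{2}(m+i\partial_{y_0})\gamma$, not a Cauchy--Riemann operator. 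The actual mechanism that makes the inductive step go through is that $m+i\partial_{y_0}$ is an \emph{invertible} ordinary differential operator in $y_0$ (for $m\geq 1$), combined with the usual $\overline\partial$-Poincar\'e lemma in the $z_1,\ldots,z_n$ variables. Your sketch needs to separate these two pieces explicitly rather than bundling them into a nonexistent Dolbeault lemma on $Z$. You should also spell out the form of the coordinate change in the $z_0$-slot: a naive substitution $z_0\mapsto z_0+x_0^{m+1}\tilde\psi_0$ does not interact with $\bdel_{\overline z_0}$ the way $z_j\mapsto z_j+x_0^{m+1}\tilde\psi_j$ interacts with $\partial_{\overline z_j}$, and tracking this correctly is where the ``radial'' bookkeeping you allude to actually bites.
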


The contribution of the present article will be to improve the above result by showing that the deformation terms $\Gamma_0,\ldots, \Gamma_n$ can be got rid of. Thus, our  main result is the following.

\begin{thm}\label{mainthm}
Let $M$ be complex $b$-manifold of real dimension $2n+2$, $n \geq 0$. Then, every $p \in \partial M$ has an open neighbourhood $U \subseteq M$ on which there are smooth local coordinates $(x_0,y_0, \ldots,x_n,y_n)=(z_0,\ldots,z_n)$ centered at $p$ with $x_0$ vanishing on $\partial M$ and positive on the interior such that
\begin{align*}
\bdel_{\overline z_0} &\coloneqq \tfrac{1}{2}(x_0\partial_{x_0}+i\partial_{y_0})\\
\partial_{\overline z_j} &\coloneqq \tfrac{1}{2}(\partial_{x_j}+i\partial_{y_j}) && j=1,\ldots,n 
\end{align*}
is a frame for ${\bT^{0,1}}M$ over $U$.
\end{thm}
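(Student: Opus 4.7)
The plan is to perform a singular change of variable in the direction normal to $Z$ that converts the $b$-complex structure into an honest complex structure on a half-cylinder, with the characteristic feature that the new structure is a flat-asymptotic perturbation of the standard Euclidean complex structure at the cylinder's end. Apply the classical Newlander-Nirenberg theorem there, arranging for the resulting holomorphic coordinates to be flat-asymptotic to the standard ones, and exponentiate back to obtain the desired smooth coordinates on the original $b$-manifold.

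\textbf{Unfolding.} Start from Mendoza's coordinates provided by Theorem~\ref{Mendoza} and restrict to $\{x_0 > 0\}$. Substitute $s_0 \coloneq \log x_0$, so that $x_0\partial_{x_0} = \partial_{s_0}$ and $\bdel_{\overline z_0} = \tfrac{1}{2}(\partial_{s_0}+i\partial_{y_0}) = \partial_{\overline w_0}$ with $w_0 \coloneq s_0 + iy_0$. In the coordinates $(s_0, y_0, z_1, \ldots, z_n)$ on $\{s_0 < \log\epsilon\} \times V \subseteq \R^{2n+2}$, the frame $L_0, \ldots, L_n$ defines an ordinary integrable almost-complex distribution. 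Substituting $x_0 = e^{s_0}$ into the (flat-in-$x_0$) coefficients of the $\Gamma_k$ shows that the perturbation, together with all its derivatives, decays faster than any exponential in $|s_0|$ as $s_0 \to -\infty$.

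\textbf{Asymptotic Newlander-Nirenberg.} The analytical core is to produce holomorphic coordinates $(\tilde w_0, \tilde z_1, \ldots, \tilde z_n)$ for the perturbed structure on the half-cylinder with $\tilde w_0 - w_0$ and $\tilde z_j - z_j$ decaying faster than any $e^{N s_0}$ as $s_0 \to -\infty$. Writing $\tilde w_0 = w_0 + g_0$, $\tilde z_j = z_j + g_j$, the conditions $L_k \tilde w_0 = L_k \tilde z_j = 0$ become a linear inhomogeneous elliptic system for the corrections $g_k$ whose right-hand sides are already flat at $s_0 = -\infty$, the flatness coming directly from the infinite-order vanishing of Mendoza's $\Gamma_k$. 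I would attempt a flat solution either via weighted $\overline\partial$-estimates in the asymptotic regime on the tube, or by extending the perturbed structure to all of $\R^{2n+2}$ with a cutoff that makes it standard past some large $-M$, applying classical Newlander-Nirenberg there, and invoking local uniqueness to obtain a chart that coincides with $w_0, z_j$ on the standard region.

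\textbf{Refolding and main obstacle.} Set $\tilde x_0 \coloneq x_0 \cdot e^{\mathrm{Re}(\tilde w_0 - w_0)}$, $\tilde y_0 \coloneq y_0 + \mathrm{Im}(\tilde w_0 - w_0)$, and $\tilde z_j \coloneq z_j + g_j$. By the flatness of $g_k$, each of these functions extends smoothly across $Z$, the extension of $\tilde x_0$ vanishes on $Z$ with unit normal derivative, and the Jacobian at $p$ is the identity, so after shrinking $U$ one has a genuine smooth chart. Running the analogous argument on $\{x_0 < 0\}$ using $s_0 = \log(-x_0)$ and observing that both one-sided charts agree with $(x_0, y_0, z_j)$ to infinite order on $Z$ patches them into a single smooth coordinate system on a neighborhood of $p$. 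The frame condition for ${\bT^{0,1}}M$ in the new coordinates follows by construction from the holomorphy of $\tilde w_0, \tilde z_j$ on the cylinder. The main obstacle throughout is the asymptotic Newlander-Nirenberg step: the classical theorem is purely local with no asymptotic content, and propagating Mendoza's flatness through to flat coordinate corrections is the only genuine piece of analysis in the argument.
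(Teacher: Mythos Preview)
Your strategy is natural and close in spirit to the paper's, but the step you yourself flag as the main obstacle---the ``asymptotic Newlander--Nirenberg'' producing holomorphic coordinates on the half-cylinder with $\tilde w_0 - w_0$, $\tilde z_j - z_j$ flat at $s_0=-\infty$---is a genuine gap, and neither of your suggested routes closes it as written. The cutoff-plus-local-uniqueness idea does not work: the classical theorem yields a chart on a small ball, and ``local uniqueness'' of holomorphic charts holds only up to composition with an arbitrary biholomorphism, so there is no mechanism forcing a chart obtained somewhere in the perturbed region to extend over the whole tube and agree with $(w_0,z_j)$ at the end. The weighted-$\overline\partial$ route is more credible, but you would need to establish invertibility of $\overline\partial$ between super-exponentially weighted spaces on the tube, absorb the perturbation $\Gamma$ as a small operator there, and verify that the resulting correctors $g_k$ inherit the full infinite-order flatness required for your refolding step; none of this is standard or supplied.

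The paper bypasses this analysis with a single further substitution: compose your map $w_0 = \log x_0 + i y_0$ with the exponential to form $g(x_0,y_0)=x_0 e^{iy_0}=e^{w_0}$ (times the identity in the remaining coordinates). This sends the end $s_0 \to -\infty$ of your cylinder to the \emph{origin} of $\C^{n+1}$, and the super-exponential decay of the $\Gamma_k$ becomes ordinary infinite-order vanishing at that point (Lemmas~\ref{pushlem}, \ref{genpushlem}). After dividing out the harmless factor $\overline z_0$ that $g_*$ introduces (Lemma~\ref{divzbar}), one is left with an honest integrable almost-complex structure on a neighbourhood of $0\in\C^{n+1}$, to which the classical Newlander--Nirenberg theorem applies locally with no asymptotic input whatsoever. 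The holomorphic functions so obtained pull back to $b$-holomorphic functions with the correct normal derivative (Theorems~\ref{2dexist}, \ref{exist}), and a separate elementary argument (Sections~\ref{sec:2main}, \ref{sec:main}) upgrades these to the desired coordinate chart. In short, the missing idea in your proposal is to compactify the cylinder end rather than analyse it.
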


Readers  familiar with the techniques needed to prove the classical Newlander-Nirenberg theorem will be unsurprised  that nonellipticity of the operator $\bdel_{\overline z} =\frac{1}{2}(x\partial_x + i \partial_y)$ is at the heart of the matter. The difficulty is that, without ellipticity,  we do not automatically have good existence theory for solutions to deformations of this operator. The main insight of this article is that it is possible to  get around this difficulty by leveraging the polar  coordinate change  $g(x,y)=xe^{iy}$. Notice that $\bdel_{\overline z} \, g =0$, i.e. $g$ is ``$b$-holomorphic''.

Let us  summarize the contents of this paper. In Section~\ref{sec:background}, we collect definitions and basic results on complex $b$-manifolds. In Section~\ref{sec:2exist}, we prove in the two-dimensional case that nontrivial $b$-holomorphic functions exist locally. In Section~\ref{sec:2main}, we use the latter existence result  to prove the main result Theorem~\ref{mainthm} in the two-dimensional case. In Section~\ref{new}, we recall Catlin's analog \cite{catlin} of the Newlander-Nirenberg theorem for manifolds with boundary and make preparations for extending our results to the higher-dimensional cases. Sections~\ref{sec:exist} and \ref{sec:main} extend the  results of Sections~\ref{sec:2exist} and \ref{sec:2main} to the general case. Finally, in Section~\ref{generalpush}, we situate the singular coordinate change $g(x,y)=xe^{iy}$ in  a more general context which will be relevant for planned future work on complex $b^k$-manifolds (see \cite{Barron-Francis} for the definition of a complex $b^k$-manifold).

\section{Background}\label{sec:background}

The language of $b$-geometry, also known as log-geometry, was introduced by Melrose \cite{Melrose}. One encounters two superficially different  formulations  of $b$-geometry  in the literature. In the  original approach, one works on a manifold with boundary. Other authors instead use a manifold without boundary that is equipped with a given hypersurface \cite{BLS}. We follow the former approach.

\begin{defn}
A \textbf{$\mathbf{b}$-vector field} on a manifold with boundary $M$ is a smooth  vector field on $M$ that is tangent along $\partial M$. The collection of $b$-vector fields is denoted $\bX(M) \subseteq \X(M)$.
\end{defn}

\begin{ex}\label{bex}
The closed half space  $\R^{n+1}_+$
has $\bX(\R^{n+1}_+)=\langle x_0 \partial_{x_0},\partial_{x_1},\ldots,\partial_{x_n}\rangle$, the free $C^\infty(\R^{n+1}_+)$-module generated by $x_0 \partial_{x_0},\partial_{x_1},\ldots,\partial_{x_n}$.
\end{ex}

Of course, the above example above completely captures the local structure of $b$-vector fields. Accordingly, $\bX(M)$ is a projective $C^\infty(M)$-module closed under Lie bracket for any manifold with boundary. Applying Serre-Swan duality to the inclusion $\bX(M)\to \X(M)$, one has a corresponding  Lie algebroid  $\an : {\bT M} \to TM$ whose anchor map $\an$ induces (abusing notation) an identification $C^\infty(M;{\bT M})= \bX(M)$.

\begin{defn}
For a manifold with boundary $M$, the Lie algebroid ${\bT M}=({\bT M},\an,[\cdot,\cdot])$    satisfying $C^\infty(M;{\bT M})= \bX(M)$ described above is called the \textbf{$\mathbf{b}$-tangent bundle} of $M$.
\end{defn}
In Example~\ref{bex}, $x_0 \partial_{x_0},\partial_{x_1},\ldots,\partial_{x_n}$ is a global frame for ${\bT M}$. The anchor map $\an$ descends from evaluation of vector fields. Thus, over the interior, $\an$ is an isomorphism and, over the boundary, the kernel of $\an$ is the line bundle spanned by $x_0 \partial_{x_0}$. In general, for  any $b$-manifold $M$, the bundles   ${\bT M}$ and  $TM$ are canonically isomorphic over the interior  via $\an$.

If $\theta:M_1\to M_2$ is a diffeomorphism of manifolds with boundary, it is clear that $\theta$ preserves the associated modules of $b$-vector fields. By Serre-Swan duality, $\theta$ induces a Lie algebroid isomorphism: 
\begin{align}\label{bpush}
{^b \theta}_* : {\bT M_1} \to {\bT M_2}.
\end{align}
Over the interiors, if we apply the aforementioned natural identifications of the $b$-tangent bundle and usual tangent bundle, ${^b\theta_*}$ coincides with  $\theta_*:T M_1 \to TM_2$, the usual   induced isomorphism given by pushforward of tangent vectors.

\begin{defn}
The \textbf{$\mathbf{b}$-cotangent bundle} ${\bT M^*}$ of a manifold with boundary $M$ is the dual of the $b$-tangent bundle ${\bT M}$.
\end{defn}

The dual $\an^* : TM^* \to {\bT M^*}$ of the anchor map $\an:{\bT M}\to TM$ is likewise an isomorphism when restricted over the interior. In particular, $\an^*$ is injective on a dense open set, and so (what is equivalent) induces an injective mapping $C^\infty(M;T M^*) \to C^\infty(M;{\bT M^*})$. Put in simpler terms, a 1-form $\omega \in C^\infty(M;T M^*)$   is fully determined by its   pairing with $b$-vector fields. The \textbf{$\mathbf{b}$-exterior derivative} of a smooth function $f$ on $M$ is defined as:
\begin{align}\label{bd} 
\bd f\coloneqq \an^*( df). 
\end{align}
Actually, one may quite legitimately regard $d f$  and $\bd f$ as identical, with the latter notation merely hinting that  one  only intends to pair the form with $b$-vector fields.

A general philosophy of $b$-calculus is that many classical geometries admit $b$-analogues in which  the role of the tangent bundle is played by  the $b$-tangent bundle. A notable example is $b$-symplectic geometry  (\cite{GMP}, \cite{LLSS}). Complex $b$-geometry was introduced by  Mendoza \cite{Mendoza} who furthermore made a systematic study of the $b$-Dolbeault complex. We remark that the basic idea of a complex $b$-structure is mentioned in passing on pp.~218 of \cite{Melrose}.

\begin{defn}
A \textbf{complex $\mathbf{b}$-structure} on an (even-dimensional)  manifold with boundary $M$  is a complex subbundle ${\bT^{0,1}}M$ of the complexified $b$-tangent bundle satisfying:
\begin{enumerate}[(i)]
\item $\C {\bT  M} =  {\bT^{1,0}} M  \oplus {\bT^{0,1}} M$, where ${\bT^{1,0}}M\coloneqq \overline{ {\bT^{0,1}} M }$,
\item ${\bT^{0,1}} M$ is involutive.
\end{enumerate}
A \textbf{complex $\mathbf{b}$-manifold} is a manifold with boundary equipped with a complex $b$-structure. An \textbf{isomorphism}  $\theta : M_1 \to M_2$ of complex $b$-manifolds is a diffeomorphism of the underlying manifolds with boundary satisfying ${^b\theta_*}({\bT^{0,1}}M_1) ={\bT^{0,1}}M_2$. Here, by abuse of notation, ${^b\theta_*}$ also denotes the complexification of the isomorphism ${^b\theta_*}:{\bT M_1} \to {\bT M_2}$ of \eqref{bpush}.
\end{defn}

Because ${\bT M}$ is naturally isomorphic to $TM$ over the interior, a $b$-complex structure on $M$ in particular gives a complex structure in the usual sense on the interior. In particular, one may consider ordinary complex manifolds as special cases of complex $b$-manifolds with empty boundary.   Actually, for any complex $b$-manifold $M$, the restricted complex structure on the interior   determines the whole $b$-complex structure, as the following proposition shows. Of course, not every complex structure on the interior  of $M$ is the restriction of a (unique) $b$-complex structure on $M$, just those that degenerate in a particular way at the boundary.

\begin{propn}\label{restriso}
Suppose $M_1,M_2$ are complex $b$-manifolds and $\theta : M_1 \to M_2$ is a diffeomorphism that  restricts to an isomorphism of (ordinary) complex manifolds on the interiors. Then, $\theta$ is an isomorphism of complex $b$-manifolds.
\end{propn}
\begin{proof}
We have an isomorphism of  complexified $b$-tangent bundles ${^b\theta_*} : \C {\bT} M_1 \to \C {\bT M_2}$. Applying the natural isomorphisms of $b$-tangent bundle and ordinary tangent bundle on the interiors, the subbundles ${^b\theta_*}({\bT^{0,1}}M_1)$ and ${\bT^{0,1}}M_2$ of $\C {\bT M_2}$ agree away from the boundary. The conclusion follows by a continuity argument.
\end{proof}

Taking duals, the splitting $\C{\bT M} = {\bT^{1,0}}M\oplus{\bT^{0,1}}M$ given by a complex $b$-structure induces  an associated splitting of the complexified $b$-cotangent bundle $\C {\bT M^*}$  and of the complexified $b$-exterior derivative $\bd : C^\infty(M,\C) \to C^\infty(M;\C {\bT M^*})$, as tabulated below:
\begin{align*}
&\C{\bT M^*}  ={\bT^{1,0}}M^* \oplus {\bT^{0,1}}M^* \\
& \bd = \bdel + \bdelbar \\ 
 &\bdel : C^\infty(M,\C) \to C^\infty(M;{\bT^{1,0}}M^*) \\ 
& \bdelbar  : C^\infty(M,\C) \to C^\infty(M;{\bT^{0,1}}M^*)
 \end{align*}
In effect, $\bdel f$, respectively $\bdelbar f$, is $df$ restricted to the $b$-$(1,0)$-vector fields, respectively the $b$-$(0,1)$-vector fields (that is to say, sections of ${\bT^{1,0}}M$, respectively sections of ${\bT^{0,1}}M$).

\begin{defn}
A \textbf{$\mathbf{b}$-holomorphic} function on a complex $b$-manifold $M$ (or an open subset thereof) is a smooth function $f:M\to\C$  satisfying $\bdelbar f =0$. We write $\bO(M)$ for the collection of $b$-holomorphic functions  on $M$.
\end{defn}
In more concrete terms, $f$ is $b$-holomorphic if $Xf = 0$ for every $b$-$(0,1)$-vector field $X$ or (what is sufficient), all $X$ in a given frame for ${\bT^{0,1}}M$. Because ${\bT^{0,1}}M$ is involutive, $\bO(M)$  is a  ring with respect to pointwise-multiplication.

\begin{ex}\label{basicex}
Let $M=\R^2_+=\C_+$, the closed $x \geq 0$ half space. Then, $\bX(M)=\langle x\partial_x,\partial_y \rangle$, the free $C^\infty(M)$-module generated by $x\partial_x$ and $\partial_y$. An example of a  complex $b$-structure for $M$ has ${\bT^{0,1}}M$  spanned by
\begin{align*}
\bdel_{\overline z}\coloneqq\tfrac{1}{2}(x\partial_x+i\partial_y)
\end{align*}
and, correspondingly, ${\bT^{1,0}}M$ spanned by
\begin{align*}
\bdel_z\coloneqq\tfrac{1}{2}(x\partial_x-i\partial_y).
\end{align*}
A function $f$ is $b$-holomorphic precisely when $\bdel_{\overline z}\, f=0$. An important globally-defined $b$-holomorphic function on $M$ was mentioned in the introduction:
\begin{align*}
g :M \to \C && g(x,y) = xe^{iy}.
\end{align*}
More generally, if $h$ is a usual holomorphic function defined near $0 \in \C$, then $f = h \circ g$ is a $b$-holomorphic function defined near $0 \in M$. There also exist $b$-holomorphic functions defined near $0$  not of the latter form and, indeed, not real-analytic near $0$. For example, on the domain $\{ (x,y)  \in M :   -\frac{\pi}{4}<y<\frac{\pi}{4}\}$, the function
\[ f(x,y) = \begin{cases}
\exp(\frac{-1}{g(x,y)}) & x>0 \\
0 & x=0
\end{cases} \]
is $b$-holomorphic. These examples are also noted in \cite{Barron-Francis}, Example~8.1.
\end{ex}

\begin{rmk}
Observe that the $b$-holomorphic function  $g(x,y)=xe^{iy}$ in the above example may be conceptualized as $g(x,y)=\phi_{iy}(x)$, where $\phi_t(x)=e^t x$ is the flow of $x \partial_x$. This somewhat cryptic remark will be expanded on in Section~\ref{generalpush}.
\end{rmk}

\begin{ex}\label{delnot}
Equip $\R^{2n+2}$ with coordinates  $(x_0,y_0,\ldots,x_n,y_n)=(z_0,\ldots,z_n)$ and let  $M=\R^{2n+2}_+$, the closed $x_0 \geq 0$ half space. Thus,
\begin{align*}
\bX(M)=\langle x_0\partial_{x_0},\partial_{y_0}, \partial_{x_1}, \partial_{y_1}, \ldots,\partial_{x_n}, \partial_{y_n}\rangle.
\end{align*}
An example of a  complex $b$-structure ${\bT^{0,1}}M$  for $M$ is the one spanned by:
\begin{align*}
\bdel_{\overline z_0} &\coloneqq\tfrac{1}{2}(x_0\partial_{x_0}+i\partial_{y_0}) \\
\partial_{\overline z_j}&\coloneqq\tfrac{1}{2}(\partial_{x_j}+i\partial_{y_j}) && j=1,\ldots,n. 
\end{align*}
Correspondingly, a frame for ${\bT^{1,0}}M$ is:
\begin{align*}
\bdel_{z_0}&\coloneqq\tfrac{1}{2}(x_0\partial_{x_0}-i\partial_{y_0}) \\
\partial_{z_j}&\coloneqq\tfrac{1}{2}(\partial_{x_j}-i\partial_{y_j}) && j=1,\ldots,n.
\end{align*}
The function $g(x_0,y_0,\ldots,x_n,y_n)=x_0e^{iy_0}$ from Example~\ref{basicex}   is still $b$-holomorphic, as are the complex  coordinate functions $z_j =x_j+iy_j$, $j=1,\ldots,n$. Additional $b$-holomorphic functions may be obtained by applying an $(n+1)$-variable holomorphic function (in the usual sense) to $g,z_1,\ldots,z_n$.
\end{ex}

\begin{ex}\label{tanex}
Let $N$ be the $x \geq 0$ half space $\R^2_+ = \C_+$ equipped with the complex $b$-structure for which ${\bT^{0,1}}N$ is spanned by:
\[ L \coloneqq \tfrac{1}{2} \Big( x \partial_x + i ( -xy \partial_x + (1+y^2) \partial_y) \Big). \]
This  complex $b$-manifold $N$ is isomorphic to a neighbourhood of the origin of the complex $b$-manifold $M=\R^2_+$ from Example~\ref{basicex} for which  ${\bT^{0,1}}M$ was spanned by $\bdel_{\overline z} \coloneqq \frac{1}{2}(x\partial_x + i \partial_y)$. Indeed, the diffeomorphism $G : \R_+ \times (-\frac{\pi}{2},\frac{\pi}{2})\to\R^2_+$ defined by $G(x,y)=(x\cos y, \tan y)$ satisfies $G_*(\bdel_{\overline z}) = L$. Thus, $b$-holomorphic functions on $N$ may be obtained by pushforward through $G$. For example, the pushforward of $g(x,y)=xe^{iy}$ by $G$ is $g'(x,y)=x+ixy$ which indeed satisfies $Lg'=0$. 
\end{ex}

For a general complex $b$-manifold $M$, it is not immediately clear whether, locally near points on the boundary, any nonconstant $b$-holomorphic functions must exist at all. This article will confirm that they do (Sections~\ref{sec:2exist} and \ref{sec:exist}).

Note that a $b$-holomorphic function $f$ on a complex $b$-manifold $M$ restricts to a  holomorphic function in the usual sense on the interior. Indeed, in a similar spirit to Proposition~\ref{restriso}, if $f$ is smooth on $M$ and restricts to a  holomorphic function on the interior, then it is $b$-holomorphic on $M$ by a continuity argument.

Recall that, in the case of ordinary complex manifolds (defined using integrable almost-complex structures), the existence of local holomorphic functions is closely-tied  to the existence of charts. The proposition below, which will be used in Sections~\ref{sec:2main} and \ref{sec:main}, illustrates this principle. We  include its (standard) proof for the sake of completeness.

\begin{propn}\label{holocharts}
Let $M$ be a complex manifold. Let $\Omega \subseteq M$ and $\Omega'\subseteq \C^n$ be open. Suppose $f_1,\ldots,f_n:\Omega \to \C$ are holomorphic functions such that  $\theta(p) = (f_1(p),\ldots,f_n(p))$ defines a diffeomorphism $\theta : \Omega \to \Omega'$. Then,  $\theta_*$ maps $T^{0,1}M$ onto $T^{0,1}\C^n$ over $\Omega$ where, by definition, the latter is  spanned by $\partial_{\overline z_j} \coloneqq \frac{1}{2}(\partial_{x_j} + i \partial_{y_j})$, $j=1,\ldots,n$.
\end{propn}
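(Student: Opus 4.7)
The proof is straightforward and amounts to checking that holomorphic functions serve as ``coordinate tests'' for the $(0,1)$-subbundle. My plan is as follows.

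First I would recall the characterization that a complex tangent vector $v \in \C T_p M$ lies in $T^{0,1}_p M$ if and only if $v(h) = 0$ for every germ of a holomorphic function $h$ at $p$. This is immediate in local holomorphic coordinates $(w_1,\ldots,w_n)$ around $p$: writing $v = \sum_j a_j \partial_{w_j} + \sum_j b_j \partial_{\overline w_j}$, the condition $v(w_k) = 0$ for all $k$ forces $a_k = 0$, while $\partial_{\overline w_j}$ indeed annihilates every holomorphic function. So membership in $T^{0,1}$ is equivalent to annihilating the holomorphic coordinate functions (or, equivalently, all local holomorphic functions).

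Next I would take $v \in T^{0,1}_p M$ and decompose $\theta_*(v) \in \C T_{\theta(p)} \C^n$ in the frame $\partial_{z_j}, \partial_{\overline z_j}$ at $\theta(p)$. The $\partial_{z_j}$-component of $\theta_*(v)$ is extracted by pairing with $z_j$, giving
\begin{equation*}
\theta_*(v)(z_j) \;=\; v(z_j \circ \theta) \;=\; v(f_j) \;=\; 0,
\end{equation*}
where the last equality uses the hypothesis that $f_j$ is holomorphic together with the characterization above. Hence $\theta_*(v)$ is a pure $\partial_{\overline z}$-combination, i.e.\ $\theta_*(v) \in T^{0,1}_{\theta(p)} \C^n$.

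Finally, I would close the argument with a dimension count: $\theta$ is a diffeomorphism, so $\theta_*$ is a pointwise $\C$-linear isomorphism on the complexified tangent bundles. Since we have just shown $\theta_*\bigl(T^{0,1}_p M\bigr) \subseteq T^{0,1}_{\theta(p)} \C^n$ and both subspaces have complex dimension $n$, this inclusion is an equality at each point, which yields the claimed surjective identification of subbundles over $\Omega$. There is no serious obstacle here; the only thing worth being careful about is making sure the ``annihilation of holomorphic functions'' characterization of $T^{0,1}$ is stated for $\C$-valued holomorphic functions (so that one can legitimately feed in $f_j$ and $z_j$ without worrying about real-linearity subtleties).
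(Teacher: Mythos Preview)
Your proof is correct and follows essentially the same approach as the paper: the core computation $\theta_*(v)(z_j) = v(f_j) = 0$ is identical, and both arguments conclude that $\theta_*(v)$ has no $\partial_{z_j}$-component. Two minor remarks: your preliminary ``if and only if'' characterization of $T^{0,1}$ via annihilation of holomorphic germs invokes Newlander--Nirenberg, but only the trivial forward direction (holomorphic means annihilated by $(0,1)$-vectors, which is definitional here) is actually used; and your explicit dimension count at the end is a nice touch that the paper leaves implicit.
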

\begin{proof}
Let $L \in C^\infty(\Omega;T^{0,1}M)$ be  a $(0,1)$-vector field defined on $\Omega$.  Using $L f_j=0$ for $j=1,\ldots,n$ and $\theta_*(f_j)=z_j$ (the $j$th coordinate function) for $j=1,\ldots,n$, we obtain 
\[ \theta_*(L)z_j=0. \] 
If we write $\theta_*(L) = \sum_{j=1}^n \alpha_j \partial_{z_j} + \beta_j \partial_{\overline z_j}$, where $\alpha_j,\beta_j$ are smooth complex-valued functions on $\Omega'$, then the above  gives $\alpha_j=0$ for $j=1,\ldots,n$. Thus, we obtain
\[ \theta_*(L) =  \sum_{j=1}  \beta_j \partial_{\overline z_j}, \]
so that $\theta_*(L)$ is a $(0,1)$-vector field on $\Omega'$. 
\end{proof}

Our arguments follow a similar paradigm in that we first construct a supply of $b$-holomorphic functions and then use those functions to build coordinate charts. Note, however, that in the case of complex $b$-manifolds, $b$-holomorphic functions cannot directly play the role of coordinate functions of charts. To see this, note that any solution $f : \R^2 \to \C$ of $\bdel_{\overline z} f=0$ (where $\bdel_{\overline z}\coloneqq \frac{1}{2}(x\partial_x+i\partial y)$) is necessarily constant along $\{0\}\times \R$.

\section{Existence of \texorpdfstring{$b$}{b}-holomorphic functions: dimension \texorpdfstring{$2$}{2}}\label{sec:2exist}

In this section, we show that, near any point on the boundary of a complex $b$-manifold of real dimension $2$, there is a nontrivial $b$-holomorphic function. Here, ``nontrivial'' means the derivative normal to the boundary is nonzero. From Theorem~\ref{Mendoza}, this amounts to the following (as usual $\bdel_z \coloneqq \frac{1}{2}(x\partial_x-i\partial_y)$, $\bdel_{\overline z} \coloneqq \frac{1}{2}(x\partial_x+i\partial_y)$).

\begin{thm}\label{2dexist}
Let $\R^2_+$ be the closed $x \geq 0$ half plane in $\R^2$.
Let $M \subseteq \R^2_+$ be a relative neighbourhood of the origin   equipped with a complex $b$-structure  ${\bT^{0,1}}M$ spanned by:
\begin{align*}
L \coloneqq \bdel_{\overline z} + \gamma \cdot \bdel_z
\end{align*} where $\gamma : M \to \C$ is a smooth function vanishing to infinite order on $\{0\}\times \R$. Then, there is a neighbourhood $U \subseteq M$ of the origin and a $b$-holomorphic function $f:U \to \C$ that vanishes  on $\{0\}\times \R$ and satisfies $\partial_x f(0,0)=1$. 
\end{thm}

The proof of Theorem~\ref{2dexist} will rely on the following   three  elementary lemmas whose proofs we omit. 

Firstly, we record several vector field pushforward formulae for the singular coordinate change $g(x,y)=xe^{iy}$. Note $g$  is simply  the polar coordinate transformation; it is a local diffeomorphism away from $\{0\}\times\R$.  We  treat a more general type of coordinate change in Section~\ref{generalpush}, providing additional context for the methods of this section and rendering them somewhat less ad hoc.

\begin{lemma}\label{pushforwards}
The smooth surjection $g : \R^2 \to \C$ defined by $g(x,y)=xe^{iy}$ satisfies:
\begin{align*}
g_*(x \partial_x) &= x \partial_x + y \partial_y  &
g_*(\bdel_{z}) &=  z \partial_{ z}  \\
g_*(\partial_y) &= -y \partial_x + x \partial_y & 
g_*(\bdel_{\overline z}) &= \overline z \partial_{\overline z}.
\end{align*}
Here, $\bdel_{\overline z} \coloneqq \frac{1}{2}(x\partial_x+i\partial_y)$, $\bdel_{z} \coloneqq \frac{1}{2}(x\partial_x-i \partial_y)$, $\partial_{\overline z} \coloneqq \frac{1}{2}(\partial_x + i \partial_y)$, $\partial_z \coloneqq \frac{1}{2}(\partial_x - i \partial_y)$.  
\qed
\end{lemma}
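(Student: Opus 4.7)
Since $g$ fails to be a local diffeomorphism along $Z = \{0\} \times \R$, I interpret each identity $g_*(X) = Y$ as the assertion that the vector field $X$ on the source and the vector field $Y$ on the target are $g$-related in the usual Lie-theoretic sense, namely $dg_{(x,y)} \cdot X_{(x,y)} = Y_{g(x,y)}$ for every $(x,y)$. On the open dense set $\{x \neq 0\}$ where $g$ is a local diffeomorphism this coincides with the honest pushforward, and the identities then extend to all of $\R^2$ by continuity. So the entire content of the lemma is a direct Jacobian computation.

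First I would write $g(x,y) = (x\cos y,\, x\sin y)$ in real coordinates. Denoting the target coordinates by $(x',y')$ for clarity (and then suppressing the primes at the end), the partial derivatives of $g$ give
\begin{align*}
g_*(\partial_x)\big|_{g(x,y)} &= \cos y\, \partial_{x'} + \sin y\, \partial_{y'}, \\
g_*(\partial_y)\big|_{g(x,y)} &= -x\sin y\, \partial_{x'} + x\cos y\, \partial_{y'}.
\end{align*}
Multiplying the first equation by $x$ and recognizing $(x\cos y, x\sin y) = (x',y')$ yields $g_*(x\partial_x) = x'\partial_{x'} + y'\partial_{y'}$, and the second equation reads $g_*(\partial_y) = -y'\partial_{x'} + x'\partial_{y'}$. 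These are the first two formulae.

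The complex formulae then follow by pure linear algebra. Taking $\bdel_{\overline z} = \tfrac{1}{2}(x\partial_x + i\partial_y)$ and substituting,
\begin{align*}
g_*(\bdel_{\overline z}) = \tfrac{1}{2}\bigl((x - iy)\partial_x + (y + ix)\partial_y\bigr) = \tfrac{1}{2}\overline z\,(\partial_x + i\partial_y) = \overline z\, \partial_{\overline z},
\end{align*}
and the analogous calculation for $\bdel_z$ (with a sign change on $i$) gives $g_*(\bdel_z) = z\,\partial_z$.

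There is no real obstacle here: the lemma is a bookkeeping step. The conceptual punchline, which justifies why this computation is worth recording, is the third formula: up to multiplication by the function $\overline z$, which vanishes precisely where $g$ degenerates, the singular coordinate change $g$ intertwines the non-elliptic $b$-operator $\bdel_{\overline z}$ with the genuinely elliptic $\partial_{\overline z}$. This is exactly the mechanism that will let later sections transport classical complex-analytic existence results across $g$ to produce $b$-holomorphic functions.
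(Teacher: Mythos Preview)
Your proof is correct; the paper actually omits the proof of this lemma entirely (it is one of three ``elementary lemmas whose proofs we omit'' and is closed with a bare \qed), so there is nothing to compare against. Your direct Jacobian computation is the natural and essentially unique way to verify these formulae, and your interpretation of $g_*$ as $g$-relatedness is exactly the right reading given the degeneracy of $g$ along $Z$.
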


Secondly, we need the following  fact concerning the expression in polar coordinates of plane functions vanishing to infinite order at the origin.

\begin{lemma}\label{pushlem}
Suppose $\gamma : \R^2_+  \to \C$ is a smooth function vanishing to infinite order on $\{0\}\times\R$ that is $2\pi$-periodic in $y$. Let $g(x,y)=xe^{iy}$, as above. Then, the pushforward $g_*(\gamma)$ is a well-defined smooth function on $\C$ vanishing to infinite order at $0$. \qed 
\end{lemma}

Thirdly, we need the following  divisibility property of plane functions vanishing to infinite order at the origin. 

\begin{lemma}\label{divzbar}

Let $\gamma$ be a smooth complex-valued function on $\C$ vanishing to infinite order at $0$.  Then, $(1/z) \gamma$ and $(1/\overline z)\gamma$  extend to smooth complex-valued functions on $\C$ vanishing to infinite order at $0$.\qed
\end{lemma}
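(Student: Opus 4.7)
By taking complex conjugates the two assertions are equivalent, so it suffices to show that $\delta(z) \coloneq \gamma(z)/\overline z$, extended by $\delta(0)=0$, is smooth on $\C$ and flat at the origin. Off $z=0$, $\delta$ is smooth by inspection. My plan is to show that every partial (Wirtinger) derivative of $\delta$ on $\C\setminus\{0\}$ extends continuously across $0$ with value $0$, and then invoke the standard calculus fact that a continuous function on $\R^n$ which is smooth off a point and each of whose partial derivatives of every order extends continuously is in fact $C^\infty$, with those extensions being the actual partial derivatives. This would give $\delta \in C^\infty(\C)$ with every derivative at $0$ equal to $0$, i.e., $\delta$ flat at $0$.

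For the estimates, note that $1/\overline z$ is antiholomorphic on $\C\setminus\{0\}$, so $\partial_z(1/\overline z)=0$ and $\partial_{\overline z}^{\,j}(1/\overline z) = (-1)^j j!/\overline z^{j+1}$. The Leibniz rule then yields
\[ \partial_z^a \partial_{\overline z}^b \delta \;=\; \sum_{j=0}^{b} c_{b,j}\,\frac{\partial_z^{a}\partial_{\overline z}^{\,j}\gamma(z)}{\overline z^{\,b-j+1}} \]
for explicit combinatorial constants $c_{b,j}$. Each numerator is a derivative of the flat function $\gamma$, hence itself vanishes to infinite order at $0$; Taylor's theorem with remainder therefore gives, for every $N$, a bound of the form $|\partial_z^{a}\partial_{\overline z}^{\,j}\gamma(z)|\leq C_{N,a,b,j}|z|^N$ near $0$. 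Each summand is thus $O\bigl(|z|^{N-(b-j+1)}\bigr)$ and tends to $0$ as $z\to 0$ once $N>b-j+1$.

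Every partial derivative of $\delta$ on $\C\setminus\{0\}$ therefore extends continuously to $\C$ with value $0$ at the origin, and the extension lemma (proved by an iterated mean-value argument) delivers $\delta\in C^\infty(\C)$ flat at $0$. The same argument, with $z$ and $\overline z$ interchanged, handles $\gamma/z$. The main point requiring care is the appeal to the extension lemma: it needs the continuous extension of partial derivatives of \emph{every} order, which is exactly what the hypothesis that $\gamma$ is flat at $0$ provides, via the uniform bounds $|\partial^\beta \gamma(z)|\leq C_{N,\beta}|z|^N$ available for every multi-index $\beta$ and every $N$.
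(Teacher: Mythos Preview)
Your argument is correct. The paper does not actually supply a proof of this lemma---it is one of the ``three elementary lemmas whose proofs we omit''---so there is nothing to compare against; your Leibniz expansion combined with the flat-function estimates $|\partial^\beta\gamma(z)|\leq C_{N,\beta}|z|^N$ and the standard $C^\infty$-extension lemma is exactly the sort of direct verification the authors had in mind.
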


We now  prove the main result of this section.

\begin{proof}[Proof of Theorem~\ref{2dexist}]
Since we are only looking for a local solution, there is no harm in assuming that $M = \R^2_+$ and that $\gamma$ is $2\pi$-periodic as in Lemma~\ref{pushlem}. Indeed, we may take $\gamma$ to be compactly-supported in $\R_+ \times (-\frac{\pi}{2},\frac{\pi}{2})$  and extend it periodically. Then, the pushforward of $L$ by $g(x,y)=xe^{iy}$ is well-defined and smooth, as the following computation shows.
\begin{align*} 
g_*(L) 
&= g_*(\bdel_{\overline z}+\gamma \cdot \bdel_z) \\
&= \overline z \partial_{\overline z}  + g_*(\gamma) z \partial_z && \text{(Lemmas~\ref{pushforwards} and \ref{pushlem}).} 
\end{align*}

Because $g$ is a local diffeomorphism away from $\{0\}\times\R$, we have that $g_*(L)$ defines a complex structure on $\C \setminus \{0\}$. Furthermore, using  Lemma~\ref{divzbar}, we can write $g_*(\gamma) = \overline z \gamma'$ where $\gamma'$ is another smooth complex-valued function on $\C$ vanishing to infinite order at $0$. Thus, 
\[g_*(L)  =  \overline z ( \partial_{\overline z}  + \gamma'  z\partial_z). \]
But now, note that $\partial_{\overline z}  + \gamma'z \partial_z$ defines an (ordinary) complex structure on all of $\C$  (because the $\gamma'z \partial_z$ term vanishes at $0$). So, by the ordinary Newlander-Nirenberg theorem for dimension two, there exists a smooth complex-valued function $h$ defined near $0 \in \C$ such that 
\[ (\partial_{\overline z}  + \gamma' z \partial_z)h=0. \]
We may furthermore choose $h$  to satisfy  $h(0)=0$ and $\partial_x h(0)=1$.  Putting $f=h \circ g$ completes the proof.
\end{proof}

\begin{rmk}\label{techniquesneeded}
At the outset of the  proof above, we implicitly used the fact that  integrability is automatic  for complex structures/complex $b$-structures  of real dimension 2. In higher dimensions this is not the case and we cannot use bump functions to modify structures outside of compact sets so casually. Section~\ref{new} is devoted to developing a workaround for this precisely this issue.  
\end{rmk}

\section{Proof of main result for dimension \texorpdfstring{$2$}{2}}\label{sec:2main}

This section is devoted to the proof of  Theorem~\ref{mainthm} in the two-dimensional case ($n=0$, in the notation of Theorem~\ref{mainthm}). Thanks to Mendoza's Theorem~\ref{Mendoza}, we may take our complex $b$-manifold    to be a relative neighbourhood of the origin $M\subseteq \R^2_+$ 
with ${\bT^{0,1}}M$ spanned by:
\begin{align*}
L \coloneqq \bdel_{\overline z} + \gamma \cdot \bdel_z
\end{align*} where $\gamma : M \to \C$ is a smooth function vanishing to infinite order on $\{0\}\times\R$.  Our task is to find new coordinates near $(0,0)$ in which ${\bT^{0,1}}M$ is spanned by $\bdel_{\overline z}$.  From Theorem~\ref{2dexist}, there is an  open set $U \subseteq M$ containing $(0,0)$ and a smooth function $f : U \to \C$ such that:
\begin{enumerate}[(i)]
\item $L f =0$ ($b$-holomorphicity),
\item $f$ vanishes on $\{0\}\times \R$,
\item $f_x(0,0)=1$ (denoting partial derivatives by  subscripts for brevity).
\end{enumerate}
We split $f$ into its real and imaginary parts
 \[ f = u+iv =  (u,v) \]
(recall $\C=\R^2$ in this article). The key  will be the local  coordinate change $F$ defined, roughly speaking, by $(u,\frac{v}{u})$. In more precise terms, from (ii),  we may write
\begin{align*}
u(x,y) = x\cdot  a(x,y) && v(x,y) =  x \cdot b(x,y) 
\end{align*}
where $a$ and $b$ are smooth real-valued functions on $U$. From (iii), we have
\begin{align*}
a(0,0)=u_x(0,0) = 1 && b(0,0)=v_x(0,0)=0.
\end{align*}
Shrinking $U$, we may assume $a$ is nowhere-vanishing on $U$ and thus define 
\begin{align*}
F:U \to \R^2 && F(x,y)= \left( u(x,y) , \frac{b(x,y)}{a(x,y)} \right).
\end{align*}
\begin{claim}\label{diffeo}
After possibly further shrinking $U$ around $(0,0)$, one has:
\begin{enumerate}[(a)]
\item $F$ is a diffeomorphism from $U$ onto a relative open set $F(U) \subseteq \R^2_+$, 
\item $F(0,0)=(0,0)$ and $F(U\cap (\{0\} \times \R)) = F(U)\cap (\{0\} \times \R)$,
\item $f =  \kappa \circ F$,  where $
\kappa : \R^2 \to \R^2$ is given by  $\kappa(x,y)=(x,xy)$.
\end{enumerate}
\end{claim}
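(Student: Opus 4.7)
My plan is to establish the three items in the order (c), (b), (a), since the verification of (c) is essentially formal and (b) follows easily from (c) together with the hypothesis that $a$ is nonvanishing, while (a) is where the real content lies.

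First, for (c), I would simply chase the definitions: by construction $\kappa(F(x,y))=(u(x,y),\,u(x,y)\cdot b(x,y)/a(x,y))$, and since $u(x,y)=x\,a(x,y)$ this simplifies to $(u(x,y),x\,b(x,y))=(u(x,y),v(x,y))=f(x,y)$. For (b), clearly $F(0,0)=(u(0,0),b(0,0)/a(0,0))=(0,0)$. On $U\cap Z$ we have $x=0$, so $u(x,y)=x\,a(x,y)=0$, hence $F$ sends $U\cap Z$ into $Z$; conversely, if $(x,y)\in U$ and $F(x,y)\in Z$, then $u(x,y)=0$, and since $a$ is nowhere-vanishing on $U$ this forces $x=0$, i.e.\ $(x,y)\in Z$. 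Thus $F^{-1}(Z)\cap U=U\cap Z$, and after ensuring $F$ is a local diffeomorphism, the equality $F(U\cap Z)=F(U)\cap Z$ follows.

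For (a), I would invoke the inverse function theorem, so it suffices to show the Jacobian $DF(0,0)$ is invertible. Using $u(x,y)=x\,a(x,y)$ with $a(0,0)=1$, we get $u_x(0,0)=1$ and $u_y(0,0)=0\cdot a_y(0,0)+a(0,0)\cdot 0=0$. Hence, writing $c\coloneq b/a$, the Jacobian determinant at $(0,0)$ equals $c_y(0,0)=b_y(0,0)/a(0,0)-b(0,0)a_y(0,0)/a(0,0)^2=b_y(0,0)$, since $b(0,0)=0$. So everything reduces to showing $b_y(0,0)\neq 0$, and in fact one expects $b_y(0,0)=1$.

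This last computation is the only nontrivial step and is where $b$-holomorphicity enters. From $Lf=0$, i.e.\ $\tfrac12(xf_x+if_y)+\gamma\cdot\tfrac12(xf_x-if_y)=0$, I would rearrange to
\begin{equation*}
i(1-\gamma)f_y=-x(1+\gamma)f_x.
\end{equation*}
Since $\gamma(0,0)=0$, the factor $1-\gamma$ is nonvanishing near the origin, so $f_y=ix\,\psi\,f_x$ where $\psi\coloneq(1+\gamma)/(1-\gamma)$ is smooth with $\psi(0,0)=1$. Substituting $f=x(a+ib)$ and expanding both sides gives
\begin{equation*}
x(a_y+ib_y)=ix\psi\bigl((a+xa_x)+i(b+xb_x)\bigr),
\end{equation*}
after which dividing by $x$ (valid off $Z$, hence everywhere by continuity) and comparing imaginary parts yields $b_y=\psi\,a+x\psi\,a_x$. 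Evaluating at $(0,0)$ gives $b_y(0,0)=\psi(0,0)\,a(0,0)=1$, as required. The main (mild) obstacle is thus purely bookkeeping: making sure the division by $x$ is justified and that $\gamma$ contributes nothing at $(0,0)$, both of which are clear from the hypotheses. With $\det DF(0,0)=1\neq 0$ in hand, the inverse function theorem gives (a) after shrinking $U$.
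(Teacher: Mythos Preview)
Your argument is correct and follows essentially the same route as the paper: establish (c) by unwinding definitions, (b) from the nonvanishing of $a$, and (a) via the inverse function theorem by showing $\partial_y(b/a)(0,0)=1$ using the equation $Lf=0$. One small imprecision: when you ``compare imaginary parts'' to obtain $b_y=\psi\,a+x\psi\,a_x$, note that $\psi=(1+\gamma)/(1-\gamma)$ is in general complex-valued, so this identity is not literally correct away from $(0,0)$; however, since $\gamma(0,0)=0$ gives $\psi(0,0)=1\in\R$, your evaluation $b_y(0,0)=1$ is unaffected, and that is all you need.
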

\begin{proof}
Statement (c) is a direct consequence of the definition of $F$. For statement (b), since $u$ vanishes on $\{0\}\times\R$ and $u_x(0,0)=1$, we may, after shrinking $U$, assume $u^{-1}(0) \subseteq \{0\}\times \R$.  For statement (a), it suffices to show the Jacobian of $F$ at  $(0,0)$ is invertible. Then, by the inverse function theorem, after possibly shrinking $U$ again,  $F$ gives a diffeomorphism $U \to F(U)$.  We claim that the Jacobian of $F$ at $(0,0)$ has the form $\begin{bmatrix} 1&0 \\ * & 1 \end{bmatrix}$. Obviously the top row is correct, so we need only confirm that $\partial_y\Big(\frac{b}{a}\Big)(0,0)=1$. Away from $\{0\}\times\R$, 
\begin{align*}
\partial_y\Big(\frac{b}{a}\Big) = \partial_y\Big(\frac{v}{u}\Big) = \frac{v_yu-vu_y}{u^2}.
\end{align*}
Collecting real and imaginary parts in $Lf=0$ we have
\begin{align*}
xu_x - v_y \sim 0 && xv_x+u_y \sim 0, 
\end{align*}
where $\sim$ denotes agreement of Taylor series at $(0,0)$. Thus, 
\begin{align*}
v_yu-vu_y  \sim (xu_x)u + v (x v_x) = x^2 (u_x a + v_x b )  
\end{align*}
from which it follows that
\begin{align*}
\partial_y\Big(\frac{b}{a}\Big) \sim \frac{u_xa+v_xb}{a^2}.
\end{align*}
In particular, $\partial_y\Big(\frac{b}{a}\Big)(0,0) = \frac{(1)(1)+(0)(0)}{(1)^2} = 1$,  as was claimed.
\end{proof}

It will be relevant to see what the above constructions amount to  in the model case when  $\gamma=0$ and $L$ is simply  $\bdel_{\overline z}= \frac{1}{2}(x \partial_x + i \partial_y)$. Then, in place of $f$, we may use our function 
\[ g(x,y)=xe^{iy} = (x \cos y , x \sin y). \] 
With this replacement, the diffeomorphism $F$ above becomes the diffeomorphism $G$ appearing in Example~\ref{tanex}
\begin{align*}
G : [0,\infty) \times (-\tfrac{\pi}{2},\tfrac{\pi}{2}) \to \R^2_+ &&   G(x,y) = (x \cos y, \tan y).
\end{align*}
Define $W\coloneqq F(U)$ and $V \coloneqq G^{-1}(W)$, so that we have  diffeomorphisms
\[
\begin{tikzcd}
U \ar[r,"{F}"] & 
W 
&V. \ar[l,"{G}"']
\end{tikzcd}
\]
Applying the induced maps ${^bF_*}, {^bG_*}$ on $b$-tangent bundles (see \eqref{bpush} in Section~\ref{sec:background}), we arrive at two complex $b$-structures on $W$ which can be compared.

\begin{claim}
The complex $b$-structures on $W$ spanned by ${^bF_*}(L)$ and ${^bG_*}(\bdel_{\overline z})$ are equal.
\end{claim}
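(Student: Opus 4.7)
The plan is to exhibit a single complex-valued function $\kappa:W\to\C$ that is $b$-holomorphic with respect to \emph{both} complex $b$-structures on $W$, then deduce equality of those structures by combining Proposition~\ref{holocharts} (on the complement of the singular locus) with Proposition~\ref{restriso} (to extend across it).

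First, I would observe the identity $g = \kappa \circ G$ by direct computation:
\[ \kappa(G(x,y)) = \kappa(x\cos y, \tan y) = (x \cos y,\, x \cos y \cdot \tan y) = (x\cos y,\, x\sin y) = g(x,y). \]
Combined with the identity $f = \kappa \circ F$ from Claim~\ref{diffeo}(c) and the $b$-holomorphicity relations $Lf = 0$ and $\bdel_{\overline z} g = 0$, pushing forward through $F$ and $G$ respectively yields
\[ {^bF_*}(L)\,\kappa = 0 \quad\text{and}\quad {^bG_*}(\bdel_{\overline z})\,\kappa = 0, \]
so $\kappa$ is $b$-holomorphic with respect to each of the two complex $b$-structures under consideration.

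Next, I would restrict attention to the open dense subset $W \setminus Z$, where the $b$-tangent bundle is canonically identified with the ordinary tangent bundle. Since the Jacobian of $\kappa(x,y) = (x, xy)$ has determinant $x \neq 0$ off $Z$, the map $\kappa$ is a local diffeomorphism there. Applying Proposition~\ref{holocharts} locally on $W \setminus Z$, each of the two complex structures pushes forward through $\kappa$ to the standard complex structure on $\C$, so the two complex structures agree on $W \setminus Z$.

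To finish, I would apply Proposition~\ref{restriso} to $\id_W$ regarded as a map between the two complex $b$-structures on $W$: it trivially preserves $Z$, and by the previous paragraph it restricts to an isomorphism of ordinary complex manifolds on $W \setminus Z$. The proposition then gives that $\id_W$ is an isomorphism of complex $b$-manifolds, which is precisely the desired equality. The main obstacle I anticipate is the degeneracy of $\kappa$ (and of $\bd\kappa$) along $Z$: one cannot apply a ``holomorphic charts'' argument directly at points of the singular locus, which is why the continuity extension encoded in Proposition~\ref{restriso} is essential to the argument.
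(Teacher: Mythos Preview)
Your proof is correct and follows essentially the same approach as the paper: both arguments reduce to showing agreement on $W\setminus Z$ via Proposition~\ref{holocharts} (using that $\kappa$ is a local diffeomorphism there and that $f=\kappa\circ F$, $g=\kappa\circ G$ are holomorphic for the respective structures), then invoke Proposition~\ref{restriso} to extend across $Z$. The only cosmetic difference is that the paper phrases the off-$Z$ comparison in terms of $f|_{U\setminus Z}$ and $g|_{V\setminus Z}$ pushing forward to rescalings of $\partial_{\overline z}$, whereas you work directly on $W\setminus Z$ with the pushed-forward structures and the single function $\kappa$; you also supply the explicit verification of $g=\kappa\circ G$ that the paper leaves implicit.
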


\begin{proof}
Referring to  Proposition~\ref{restriso}, it suffices to check this away from the boundary. Thus, writing $Z \coloneqq \{0\} \times \R$, we  need only check that the usual pushforwards $F_*(L)$ and $G_*(\bdel_{\overline z})$ coincide up to a smooth rescaling over the interior $W \setminus Z$.   Note  $\kappa(x,y)=(x,xy)$ restricts to a diffeomorphism of $\R^2 \setminus Z$ and recall $f = \kappa \circ F$ on $U$,  $g = \kappa \circ G$ on $V$. Thus, we have  diffeomorphisms 
\[
\begin{tikzcd}
U \setminus Z \ar[r,"{f}"] & 
\kappa(W \setminus Z) 
& V \setminus Z, \ar[l,"{g}"']
\end{tikzcd}
\]
and it suffices to check that  $(f|_{U\setminus Z})_*(L)$ and $(g|_{V\setminus Z})_*(\bdel_{\overline z})$ agree up to a smooth rescaling. Indeed, since $f$ and $g$ are holomorphic in the usual sense away from $Z$, Proposition~\ref{holocharts} shows that $(f|_{U\setminus Z})_*(L)$ and $(g|_{V\setminus Z})_*(\bdel_{\overline z})$ are rescalings of $\partial_{\overline z} = \frac{1}{2}(\partial_x + i\partial_y)$ (actually, from Lemma~\ref{pushforwards}, we even know $g_*(\bdel_{\overline z}) = \overline z \partial_{\overline z}$).
\end{proof}

 The preceding claim completes this section's proof of  Theorem~\ref{mainthm} in the two-dimensional case. The desired local coordinates are provided by $G^{-1} \circ F : U \to V$.

\section{Extending deformed structures across boundaries}\label{new}

As noted in Remark~\ref{techniquesneeded}, additional methods are needed to extend our results to higher-dimensional cases. In this section, we prepare the ground by proving a technical result  (Proposition~\ref{sector}) to the effect that certain small deformations of the standard complex structure defined near the origin in a sector $\Omega_\theta = \{(z_0,\ldots,z_n) \in \C^{n+1}: |\Arg(z_0)| < \theta\}$ can be extended integrably into a neighbourhood of the origin. 
Our proof relies on Catlin's Newlander-Nirenberg theorem for complex manifolds with pseudoconvex boundary \cite{catlin}. We first digress  to clarify  what exactly constitutes a complex manifold with  boundary.  
As observed by Hill, there are two generally distinct notions:

\begin{defn}[]
A \textbf{complex manifold  with abstract boundary} is a real manifold with (smooth) boundary $M$ equipped with an involutive  almost complex structure\footnote{Just as in the case where boundary is absent, an ``almost complex structure'' is a subbundle $T^{0,1} M$ of the complexified tangent bundle  satisfying $\C TM = T^{1,0}M\oplus T^{0,1}M$ where $T^{1,0}M \coloneqq \overline{T^{0,1} M}$, and ``involutive'' means the  module of smooth sections is invariant under Lie bracket.}. We say   $p \in \partial M$ is a  \textbf{concrete boundary point}  if there  is a diffeomorphism from a neighbourhood   of $p$ onto a set of the form  $\{ x \in \Omega : \rho(x) \leq 0 \}$ where $\Omega \subseteq \C^d$ is open and $\rho : \Omega \to \R$ is a smooth map having $0$ as a regular value such that, under this chart identification,  $T^{0,1} M$ is framed by the standard antiholomorphic vector fields on $\C^d$. 
\end{defn}

\begin{rmk}
It deserves to be emphasized  that, unlike in the real setting,  boundaries of complex  manifolds  are typically not homogeneous. Curvature invariants along  the boundary can   obstruct biholomorphic identification of neighbourhoods of different boundary points.  This strongly contrasts the main finding of this article, that a complex $b$-manifold has no local invariants along its boundary. Thus, although the settings of complex manifolds with boundary and complex $b$-manifolds may appear superficially similar in some ways, they  are completely different in this respect.
\end{rmk}

In \cite{Hill}, Hill gave examples showing that abstract boundary is  a  more general concept than concrete boundary. Near an abstract boundary point, it may occur that there are not sufficiently many holomorphic functions smooth up to the boundary to facilitate the existence of  charts of the required type. In other words, the most optimistic analog of the Newlander-Nirenberg theorem is false for complex manifolds with boundary. An important extra hypothesis under which an analogue of the Newlander-Nirenberg theorem does hold is pseudoconvexity.  The    machinery of the Dolbeault   complex still works perfectly well on a complex manifold with abstract boundary and,  accordingly, one may define pseudoconvexity  as usual  in terms of  positivity of the Levi form.

\begin{defn}
Let $M$ be a complex manifold with abstract boundary. Fix $p \in \partial M$ and let $\rho : W \to \R$ be a smooth function on a neighbourhood $W \subseteq M$ of $p$ that is negative on the interior, vanishes on the boundary and has $0$ as a regular value. We say that $M$ has \textbf{pseudoconvex} boundary  at $p$  if $\overline \partial \partial \rho (\overline v,v) = \partial \overline \partial \rho (v,\overline v) \geq 0$ for all $v \in T^{1,0}_p \partial M \coloneqq T^{1,0}_p M \cap \C T_p \partial M$. This notion is independent of the choice of defining function.
\end{defn}

It was shown by Catlin \cite{catlin} that, if the boundary is pseudoconvex at $p$, then $p$ is a concrete boundary point.   Catlin's result can be cast as the following extension principle.

\begin{thm}[\cite{catlin}]\label{catlinthm}
Let $M$ be a $2d$-dimensional real manifold with boundary smoothly embedded in $\C^d$. Assume $M$ is equipped with an involutive almost complex structure (not the standard one inherited from $\C^d$) and that $\partial M$ is pseudoconvex with respect to this structure near  a point $p \in \partial M$. Then, there exists an open  neighbourhood $U \subseteq \C^d$ of $p$ and a complex structure on $U$ that extends the given structure on $U \cap M$. 
\end{thm}

Note it is elementary that such an extension   of the underlying almost complex structure on $M$ exists, more or less because of the way smooth functions on  manifolds with boundary are defined. The crux is of course to ensure that  the  extended structure is  involutive.

\begin{proof}
By the main theorem of \cite{catlin}, there  exists a neighbourhood $U \subseteq \C^d$ of $p$ and smooth functions $f_1,\ldots,f_d$ on $U \cap M$ whose differentials are linearly independent at $p$ that are holomorphic for the given structure. Smoothly extend these  functions arbitrarily into $U$. Shrinking $U$ around $p$, we may assume these functions define a diffeomorphism of $U$ onto an open subset of $\C^d$  which is moreover an isomorphism of almost complex structures on  $U \cap M$. Pulling back   the complex structure of $\C^d$ through this diffeomorphism yields a complex structure on $U$ extending the given structure on $U \cap M$. 
\end{proof}

We will want to apply the above theorem in the following situation:
\begin{enumerate}
\item  $M$ is embedded in  the $x_0 \geq 0$ half space $\C^{n+1}_+$ as a convex set with smooth boundary.
\item The boundary $\partial M$ contains the origin $p=0$ as well as  a neighbourhood of nearby points in the complex hyperplane $Z_c  = \{0\} \times \C^n$. 
\item  $M$ is equipped with an involutive almost complex structure that is a  deformation by terms that vanish to infinite order on $Z_c  =\{0\} \times \C^n$ of the standard structure inherited from $\C^{n+1}$.
\end{enumerate}
If $M$ could    additionally be chosen to  be strictly convex, and hence strictly pseudoconvex with respect to the standard complex structure, then  by continuity  it would also be strictly pseudoconvex for the  deformed structure sufficiently close to $Z_c  = \{0\}\times\C^n$. However, because strict convexity/pseudoconvexity   of $\partial M$ certainly fails  where it overlaps  the hyperplane $Z_c $ and nonstrict pseudoconvexity is not an open condition, things are not quite so simple. Nonetheless, we can finesse our way around this apparent issue by  instead choosing $M$ such that:  
\begin{enumerate}[resume]
\item $M$ is strictly convex away from $Z_c  =\{0\} \times \C^n$ and, moreover, the least principal curvature  of the boundary does not vanish too quickly on $Z_c $ relative to the vanishing rate of the deformation terms of the given complex structure.
\end{enumerate}
For a domain $M$ satisfying (1), (2), (3), (4) it still follows by continuity that $\partial M \setminus Z_c $ is strictly pseudoconvex for the deformed structure near $Z_c $ so that  $\partial M$ is pseudoconvex on and near  $Z_c $, allowing   Theorem~\ref{catlinthm} to be applied. Since formulating a general statement along these lines seems unlikely to provide much added clarity and only one example of such a domain is actually necessary for our purposes, it seems more appropriate to confine our discussion to an explicit example. The domain  $K$ selected for use below makes calculations relatively straightforward, but is otherwise entirely ad hoc. 

Recall we have been coordinatizing $\C^{n+1} = \R^{2n+2}$ as $(z_0,\ldots,z_n)=(x_0,y_0,\ldots,x_n,y_n)$. Here we will drop the subscript on the leading coordinates $z_0=x_0+iy_0$ and abbreviate the remaining coordinates by $w$  to reduce clutter:
\begin{align*}
z =  z_0&&x = x_0 &&  y = y_0 && w = (z_1,\ldots,z_n).
\end{align*}
We also   introduce an auxiliary function $r$ defined by:
\[ r \coloneqq 1 - |w|^2=1-(|z_1|^2+\ldots+|z_n|^2). \]

\begin{lemma}\label{pseudobound}
The domain $K  \coloneqq \{ (z,w) \in \C^{n+1}  : |w| < 1, |z -r | \leq r \}$ is pseudoconvex with respect to the standard complex structure and,  moreover, there exists   $\alpha > 0$ such that  
\begin{align*} \partial \overline \partial \rho (v,\overline v)  \geq \alpha x |v|^2 
\end{align*}
for all $p=(z,w)=(x,y,w) \in \partial K$ and all $v \in T^{1,0}_p \partial K$.
\end{lemma}

\begin{figure}
    \centering
    \includegraphics[width=0.5\linewidth]{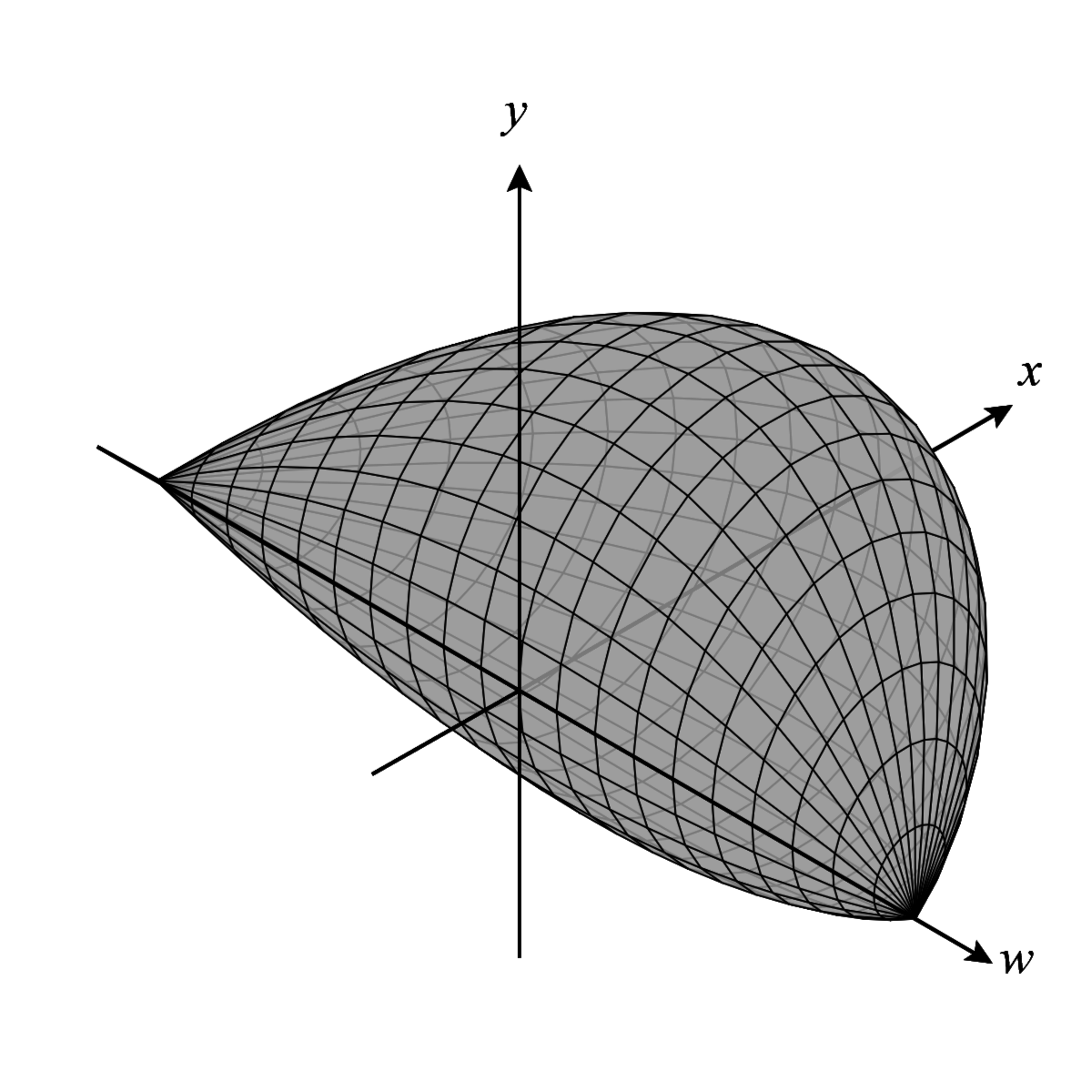}
    \caption{Visualization of the  domain $K$ in Lemma~\ref{pseudobound}. }
    \label{croissant}
\end{figure}

Above, $|v|$ denotes the norm associated  to the standard inner product on the complexified tangent bundle of $\C^{n+1}$ for which   $\partial_{z_j},\partial_{\overline z_j}$, $j=0,\ldots,n$  constitute an orthonormal frame.

\begin{proof}
Observe $K$ admits an obvious defining function:
\begin{align*}
\rho : \Omega \to \R &&  \rho(z ,w) = |z -r|^2 -r^2 = |z |^2 - (z +\overline z )r=|z |^2-2x r 
\end{align*}
where $\Omega \subseteq \C^{n+1}$ is the open set:
\[ \Omega \coloneqq \{ (z,w) \in \C^{n+1} : |w|< 1\}.\] 
Take  note that  $r=1-|w|^2$ is positive on $\Omega$ and $|z-r|=r$ on $\partial K$. 

We calculate:
\begin{align*} 
\partial \rho &= (\overline z  - r) \, dz  + 2x  \sum_{j=1}^n \overline z_j \, dz_j \\
\partial \overline \partial \rho &= dz  \, d \overline z  + \sum_{j=1}^n \Big(z_j \,  dz  \, d \overline z_j  + \overline z_j \,  d z_j \,  d \overline z  + 2x  \, d z_j \, d \overline z_j \Big) .
\end{align*}
If $p=(z ,w) \in \partial K$, then $|z -r| = r \neq 0$  so any vector  $v \in T^{1,0}_p K$ can be expressed as:
\begin{align} \label{v} v &= \lambda (z -r) \partial_{z } +  \sum_{j =1}^n r u_j \partial_{z_j} && \text{ where $u = (u_1,\ldots,u_n) \in \C^n$ and $\lambda \in \C$.}
\end{align}
This expression    \eqref{v} for $v$  leads to:
\begin{align}
|v|^2 &= r^2 (|\lambda|^2 + |u|^2) \label{|v|} \\
\partial \rho (v) &= \lambda |z -r|^2 +  2rx  \sum_{j=1}^n \overline z_j u_j  \label{drho} \\
&= \lambda r^2 +  2rx  \< w , u\>  \nonumber \\
\partial \overline \partial \rho (v,\overline v) &= |\lambda|^2 |z -r|^2 + \sum_{j=1}^n \Big( \lambda (z -r) r \overline u_j z_j + \overline \lambda (\overline z  -r ) r \overline z_j u_j + 2 r^2 x  \overline u_j u_j \Big)  \label{ddrho}  \\
&= |\lambda|^2r^2 + 2\Re\Big( \lambda r (z -r) \<u,w\> \Big) + 2r^2x  |u|^2. \nonumber \nonumber 
\end{align}
Above we are using the convention that the inner product $\<\cdot,\cdot\>$ on  $\C^n$ is conjugate linear in the first slot.
From \eqref{drho}, we have  that $v \in T^{1,0}_p K$ of the form \eqref{v} belongs to $T^{1,0}_p\partial K$  if and only if the following tangency condition holds:
\begin{align} \label{tangency} 
\lambda   r =- 2x  \<w,u\>. 
\end{align}
For such $v \in T^{1,0}_p\partial K$, we have
\begin{align*} |v|^2 &= r^2|\lambda|^2 + r^2|u|^2 && \text{using \eqref{|v|}} \\
&= 4x ^2 |\< w, u \>|^2 +r^2 |u|^2  && \text{using \eqref{tangency}} \\
&\leq 4(2r)^2 |u|^2 + r^2 |u|^2 && |w| < 1 \text{ and } 0 \leq x \leq 2r \\ 
& =17r^2 |u|^2.
\end{align*}
and so
\begin{align*}
\partial \overline \partial \rho (v,\overline v) 
&= |\lambda|^2r^2 + 2\Re\Big( \lambda r (z -r) \<u,w\> \Big) + 2r^2x  |u|^2 && \text{using \eqref{ddrho}} \\
&= 4x ^2|\<w,u\>|^2  -2 \Re\Big( 2x |\<w,u\>|^2 (z  - r) \Big) + 2r^2 x  |u|^2 && \text{using \eqref{tangency}}\\
&= 4 rx |\<w,u\>|^2 + 2r^2 x  |u|^2 && \Re(z)=x \\
&\geq 2r^2x|u|^2 \\
&\geq \frac{2}{17} x |v|^2,
\end{align*}
as required.
\end{proof}

\begin{rmk} 
As Figure~\ref{croissant} suggests, $K$ is actually strictly convex away from $Z_c  =\{0\} \times \C^n$. Indeed, with a bit more work, one may obtain a similar  lower bound on the extrinsic curvature of $\partial K$ and deduce the  bound in Lemma~\ref{pseudobound} as a consequence. Since working directly with the Levi form leads to simpler calculations, we choose not to bury the lede in this way.
\end{rmk}

We now use the bound in Lemma~\ref{pseudobound} to argue that if the structure on $K$ is deformed by terms which vanish quickly on $Z_c $ then pseudoconvexity is not disrupted near $Z_c $, leading to the following:

\begin{propn}\label{specialdomain}
Equip the domain $K = \{ (z,w) \in \C^{n+1} : |w|<1, |z-r| \leq r\}$ of  Lemma~\ref{pseudobound} with an involutive almost  complex structure admitting a global frame of the form $\partial_{\overline z_j} + \Gamma_j$, $j=0,\ldots,n$ where $\Gamma_j$ is a   smooth complex vector field on $K$ vanishing to infinite order along the complex hyperplane $Z_c  = \{0\} \times \C^n$. Then, there exists a complex structure defined on a neighbourhood $U \subseteq \C^{n+1}$ of the origin extending the given structure on $U \cap K$. 
\end{propn}

\begin{proof}
Let us write $K$ and $K'$ to refer to the domain equipped with, in the first place, the standard complex structure inherited from $\C^{n+1}$ and, in the second place, the given deformation of that structure.  Let  $\rho : \Omega \to \R$ be the defining function for the domain used in the proof of  Lemma~\ref{pseudobound}. We write $\L$  for the Levi form $\partial \overline \partial \rho$ of  $\rho$  with respect to the standard complex structure and $\L'$ for the Levi form with respect to the  deformed   structure. By hypothesis:
 \begin{enumerate}[(i)]
 \item The $2$-forms $\L$ and $\L'$  agree to infinite order along $Z_c  = \{0\} \times \C^n$. 
 \item The bundles $T^{1,0}K$ and $T^{1,0}K'$ agree to infinite order on along $Z_c $.
 \end{enumerate}  
To make sense   of (ii), note both bundles are subbundles  of the domain's complexified tangent bundle. We assume the latter  to be equipped with its  standard inner product, i.e. the one for which $\partial_{z_j},  \partial_{\overline z_j}$, $j=0,\ldots,n$ are an orthonormal basis.

Let us recall an elementary argument from linear algebra: call two $k$-dimensional subspaces $E,F \subseteq \C^d$   ``close'' if their associated orthogonal projections $P,Q$ satisfy $\| P- Q\| <1$ with respect to the Euclidean operator norm.  Recall one may assign  a unitary operator $\Phi_{F,E}$ to each close pair in such a way that  $\Phi_{F,E} (E) = F$, $\Phi_{E,E}=\id$ and $\Phi_{F,E}$ depends smoothly on $E$ and $F$. Explicitly,  one may define $\Phi_{F,E}$ to be the unitary part of the polar decomposition of the invertible operator $\frac{1}{2}((2Q-\id)(2P-\id)+\id)$. 

Using (ii) and the elementary argument just recalled, we have the following:
\begin{enumerate}[(i),resume]
\item There exists a relatively open set $W \subseteq \partial K$ containing $Z_c $ over which is defined a smooth fiberwise unitary operator $v \mapsto v' : T^{1,0} \partial K \to T^{1,0} \partial K'$ which agrees with the identity operator to infinite order along $Z_c $. 
\end{enumerate}
Next, combining (i) and (iii), we can conclude the following:
\begin{enumerate}[(i),resume]
\item There is a function $\epsilon : \partial K \to [0,\infty)$ which vanishes to infinite order on $Z_c $ such that:
\begin{align*}
 \left|\L_p(v,\overline v) - \L'_p (\cramped{v'},\overline{v'} \, )\right| \leq \epsilon(p) |v|^2 && \text{ for all } p \in \partial K , v \in T^{1,0} \partial K.
\end{align*}
\end{enumerate}
Notice that the restriction of the coordinate function $x$ to $\partial K$ is positive away from $Z_c $ and, moreover, vanishes to exactly second-order on $Z_c $. Thus, from (iv) and the bound in Lemma~\ref{pseudobound}, it follows that, near $Z_c $, the deformed complex structure on $K'$ satisfies the same sort of convexity bound satisfied by $K$. That is, there exists a neighbourhood $U_1 \subseteq K'$ containing $K' \cap Z_c $ and a constant $\beta >0$ such that 
\[ \L'_p(v,\overline v) \geq \beta x |v|^2 \]
for all $p=(z,w)=(x,y,w) \in U_1$ and all $v \in T^{1,0}_p \partial K'$. In particular, $K'$ has pseudoconvex boundary in a neighbourhood of the origin. The desired conclusion is now a consequence of Theorem~\ref{catlinthm}.
\end{proof}

It remains to explain why Proposition~\ref{specialdomain}, ostensibly concerned with the special domain $K$, also implies our desired result that certain small  deformations of the standard complex structure defined  in a sector $\Omega_\theta =\{(z,w) :|\Arg(z)| < \theta\}$ can be extended (integrably) into a neighbourhood of the origin. The basic idea is to first widen $\Omega_\theta$ to an ``obtuse sector'' by applying a power map to the $z$-coordinate. By pushforward, we then have a complex structure on $K \subseteq \C^{n+1}_+$ which we may extend into a neighbourhood of the origin using Proposition~\ref{specialdomain} and then pull back by the same power map. The complex structure so obtained   may not match  the original one on the whole of  $\Omega_\theta$, but it will on a smaller subsector.

Rather than deal with complex structures defined in specific regions of $\C^{n+1}$ below, we find it more convenient to  instead deal with almost complex structures which  we  may take to be defined everywhere, but only integrable within certain regions. This allows us to avoid excessive mumbling about domains of definition and, in cases where those domains do not have smooth boundaries, the meaning of smoothness at the corners.

In the proposition below, several verifications are left to the reader:  that it makes sense to periodize certain almost complex structures and to push them forward/pull them  back  through a power map $(z,w) \mapsto (z^k,w)$. These verifications are simpler if one passes to polar coordinates by pulling structures back through  $(x,y,w) \mapsto (xe^{iy}, w): \R^{2n+2}_+ \to \C^{n+1}$ and applying arguments similar to those appearing  in the  proofs of Theorems~\ref{2dexist} and \ref{exist}. We omit these details below to avoid obfuscation of the structure of the argument by an excessive number of coordinate changes.

\begin{propn}\label{sector}
Let $\V$ be an almost complex structure on $\C^{n+1}$ that agrees with the standard complex structure  to infinite order along $\{0\} \times \C^n$ in the sense that $\V$ admits a global frame of the form $\partial_{\overline z_j} +  \Gamma_j$, $j=0,\ldots,n$ where $\Gamma_j$ is a smooth complex vector field vanishing to infinite order on $\{0\}\times \C^{n}$.  Suppose further that there is a sector  $\Omega_\theta = \{(z,w) : |\Arg(z)|< \theta\}$ with $0 < \theta < \frac{\pi}{2}$ such that $\V$ is integrable on $\Omega_\theta$ near the origin. Then there exists an open neighbourhood $U \subseteq \C^{n+1}$ of the origin on which is defined a complex structure that agrees $\V$ on $U \cap \Omega_{\theta/2}$, i.e. on the sector of half the size. 
\end{propn}
\begin{proof}
Let $k \geq 2$ be the least positive integer for which $k\theta \geq  \frac{\pi}{2}$. Consider the transformations of $\C^{n+1}$ which  apply the $k$th power map and a rotation by $2\pi/k$ to  the first coordinate:
\begin{align*}
P_k : \C^{n+1}\to\C^{n+1} &&  P_k(z,w) &= (z^k, w) \\
R_k:\C^{n+1} \to \C^{n+1} && R_k(z,w) &= (e^{2\pi i /k} z, w).
\end{align*}

Since $\frac{k}{2} \theta \leq (k-1) \theta <\frac{\pi}{2}$, we have 
$\theta  < \frac{\pi}{k}$. In other words, the sector $\Omega_\theta$ is strictly contained in the sector $\Omega_{\pi/k}$ which is a fundamental domain for the rotation $R_k$. From this containment and a  simple bump function argument in polar coordinates, we may construct another almost complex structure $\V_1$ on $\C^{n+1}$ such that: 
\begin{enumerate}[(i)]
\item $\V_1$ and $\V$  coincide on  $\Omega_\theta$,
\item $\V_1$ still agrees with the standard complex structure to infinite order along $\{0\}\times\C^n$,
\item and $\V_1$ is  $R_k$-invariant, meaning $(R_k)_*(\V_1) =\V_1$.
\end{enumerate}
Using (ii) and (iii),  it can be checked that the pushforward $\V_2 \coloneqq (P_k)_*(\V_1)$ makes sense and is another almost complex structure on $\C^{n+1}$ agreeing with the standard complex structure to infinite order along $\{0\}\times\C^n$. 

By construction, $\V_2$ is integrable  on $P_k(\Omega_\theta) = \Omega_{k\theta}$ near the origin. Since $k \theta \geq \frac{\pi}{2}$ and the special domain $K$ in Proposition~\ref{specialdomain} is contained in the  half space $\R^{2n+2}_+$, we have that the restriction of  $\V_2$ to  $K$ is involutive near the origin. We may now apply Proposition~\ref{specialdomain} to obtain a   neighbourhood $U_3 \subseteq \C^{n+1}$ on which is defined a complex structure $\V_3$ that agrees with $\V_2$ on $U_3 \cap K$. Note $\frac{k}{2}\theta < \frac{\pi}{2}$, so the domain $K$ contains all points of $\Omega_{k \theta/2}$ sufficiently close to the origin. Thus, the complex structure $\V_3$ in particular agrees with $\V_2$ near the origin on $\Omega_{k \theta/2}$.

Note that $\V_3$ necessarily also agrees with the standard complex structure to infinite order along $\{0\}\times\C^{n+1}$. It  follows that  the pullback $\V_4 = (P_k)^*(\V_3)$ is a well-defined complex structure on $U_4 \coloneqq P_k^{-1}(U_3)$, still agreeing with the standard complex structure to infinite order along $\{0\}\times\C^n$.  By design, $\V_4$ agrees with $\V$ near the origin of $U_4 \cap \Omega_{\theta/2}$. Possibly replacing $U_4$ by a smaller neighbourhood $U$ of the origin, we obtain that the complex structure $\V_4$ agrees with the original almost complex structure $\V$ on  $U \cap \Omega_{\theta/2}$.
\end{proof}

\section{Existence of \texorpdfstring{$b$}{b}-holomorphic functions: general case}\label{sec:exist}

In this section, we extend the results of Section~\ref{sec:2exist} to the general case. That is, we show that  nontrivial $b$-holomorphic functions exist near points on the boundary of any complex $b$-manifold. By Theorem~\ref{Mendoza}, this amounts to the following.

\begin{thm}\label{exist}
Let $M \subseteq  \R^{2n+2}_+$ be a relative neighbourhood of the origin equipped with a complex $b$-structure ${\bT^{0,1}}M$ spanned by:
\begin{align*}
L_0 &\coloneqq \bdel_{\overline z_0} + \Gamma_0 \\
L_j &\coloneqq \partial_{\overline z_j} + \Gamma_j && j=1,\ldots,n
\end{align*} 
where   $\Gamma_0,\ldots,\Gamma_n$ are smooth complex vector fields on $M$ that  vanish to infinite order on $\{0\}\times\R^{2n+1}$. Then there exists a neigbourhood $U \subseteq M$ of the origin admitting $b$-holomorphic functions $f_0,f_1,\ldots,f_n : U \to \C$ such that:
\begin{enumerate}[(i)]
\item $f_0$ vanishes on $\{0\}\times\R^{2n+1}$  and $f_j(0)=0$ for $j=1,\ldots,n$,
\item $\partial_{x_j} f_k (0)=\delta_{j,k}$ for  $j,k \in \{0,1,\ldots,n\}$, using Kronecker delta notation.
\end{enumerate}
\end{thm}
As noted in Remark~\ref{techniquesneeded},  the higher-dimensional cases  will require an appeal to the results of  Section~\ref{new} to account for the loss of automatic integrability. Otherwise, the  proof will proceed  largely as it did in the two-dimensional case (Theorem~\ref{2dexist}), so we will be somewhat brief. First, we state  straightforward higher-dimensional generalizations of Lemmas~\ref{pushlem} and \ref{divzbar}. Again, we omit the proofs of these statements.

\begin{lemma}\label{genpushlem}
Suppose $\gamma : \R^{2n+2}_+\to \C$ is a smooth function that   vanishes to infinite order on the real hyperplane $Z_r \coloneqq \{0\}\times\R^{2n+1}$ and is  $2\pi$-periodic in $y_0$. Define
\begin{align*}
g &: \R^2_+ \to \C &  
g(x,y)&=xe^{iy} &&& 
\widetilde g &: \R^{2n+2}_+ \to \C^{n+1} &  
\widetilde g = g \times \mathrm{id}
\end{align*}
 Then, the pushforward $\widetilde g_*(\gamma)$ is a well-defined smooth function on $\C^{n+1}$ that vanishes to infinite order on the complex hyperplane $Z_c \coloneqq \{0\}\times \C^n$. \qed
\end{lemma}

\begin{cor}\label{vecpush}
Suppose that $\Gamma$ is a smooth complex vector field on  $\R^{2n+2}_+$ vanishing to infinite order on $\{0\}\times\R^{2n+1}$. Then, the pushforward $\widetilde g_*(\Gamma)$ is a well-defined smooth complex vector field on $\C^{n+1}$ vanishing to infinite order on $\{0\}\times\C^n$.
\end{cor}
\begin{proof}
We may write  $\Gamma = \gamma_0 \cdot \bdel_{z_0}  + \gamma_0' \cdot \bdel_{\overline z_0} + \sum_{j=1}^n \gamma_j \partial_{z_j} + \gamma_j' \partial_{\overline z_j}$ where $\gamma_j, \gamma_j'$ are smooth functions on $\R^{2n+2}_+$ vanishing to infinite order on $\{0\}\times\R^{2n+1}$ that are $2\pi$-periodic in $y_0$. Then, from Lemma~\ref{pushforwards} and Lemma~\ref{genpushlem}, we have 
\[ \widetilde g_*(\Gamma)=\widetilde g_*(\gamma_0) z_0 \partial_{z_0} + \widetilde g_*(\gamma_0') \overline z_0 \partial_{\overline z_0} + \sum_{j=1}^n \widetilde g_*(\gamma_j) \partial_{z_j} + \widetilde g_*(\gamma_j') \partial_{\overline z_j}.\] 
\end{proof}

\begin{lemma}\label{gendivzbar}
Suppose $\gamma$ is a smooth complex-valued function on $\C^{n+1}$ vanishing to infinite order on $Z_c \coloneqq \{0\} \times \C^n$. Then, $(1/z_0)\gamma$ and $(1/\overline z_0) \gamma$ extend to smooth complex-valued functions on $\C^{n+1}$ vanishing to infinite order on  $Z_c $. \qed

\end{lemma}

Having made the above preparations, we now proceed to the proof of this section's main result.

\begin{proof}[Proof of Theorem~\ref{exist}]
As in the proof of Theorem~\ref{2dexist}, since  local $b$-holomorphic functions are all we desire, we assume the deformation terms $\Gamma_0,\ldots,\Gamma_n$ are defined on all of $\R^{2n+2}_+$ and $2\pi$-periodic in $y_0$. To achieve this, we use  bump functions to construct   global periodic vector fields agreeing with the given ones near the origin. However, unlike in Theorem~\ref{2dexist}, we cannot ensure that the subbundle framed by the extended $L_0,\ldots,L_n$ is involutive because the use of bump functions  may interfere with that condition.  In other words, we can only assume without loss of generality that $M=\R^{2n+2}_+$ if we also  accept that, away from the origin, $L_0,\ldots,L_n$ only define what it would be  natural to call an ``almost complex $b$-structure'': the splitting $\C{\bT M} = {\bT^{1,0} M} \oplus {\bT^{0,1} M}$ holds everywhere, but $\bT^{0,1} M$ may only be involutive on a neighbourhood of the origin. 

Regardless, Lemma~\ref{pushforwards} and Corollary~\ref{vecpush} together imply that the  vector fields $L_j$ and $\Gamma_j$ have   well-defined pushforwards by $\widetilde g$. Indeed,
\begin{align*}
\widetilde g_*(L_0) &= \overline z_0 \partial_{\overline z_0} + \widetilde g_*(\Gamma_0) \\
\widetilde g_*(L_j) &= \partial_{\overline z_j} + \widetilde g_*(\Gamma_j) && j=1,\ldots,n, 
\end{align*}
where  $\widetilde g_*(\Gamma_j)$ is a smooth vector field on $\C^{n+1}$ vanishing to infinite order on $Z_c \coloneqq \{0\} \times \C^n$ for $j=0,\ldots,n$.  Because $\widetilde g$ is a local diffeomorphism $\R^{2n+2} \setminus Z_r \to \C^{n+1} \setminus Z_c $, we have that  $\widetilde g_*(L_j)$, $j=0,\ldots,n$ at least define an almost complex structure on $\C^{n+1}\setminus Z_c$.   Going further, we can use  Lemma~\ref{gendivzbar} to write $\widetilde g_*( \Gamma_0) = \overline z_0 \Gamma_0'$, where $\Gamma_0'$ is another smooth vector field on $\C^{n+1}$ vanishing to infinite order on $Z_c $. We claim that
\begin{align}\label{newbasis}
\tfrac{1}{\overline z_0} \widetilde g_*(L_0) 
&= \partial_{\overline z_0} + \Gamma_0'  && \\
\widetilde g_*(L_j) 
&=\partial_{\overline z_j}  + \widetilde g_*(\Gamma_j) && j=1,\ldots,n \nonumber
\end{align}
define an  almost complex structure  $\V$ on the whole of $\C^{n+1}$ (extending the aforementioned almost complex structure on  $\C^{n+1} \setminus Z_c $). Indeed, the deformation terms vanish on $Z_c $, so the vector fields \eqref{newbasis}, together with their complex conjugates, form a global frame for the complexified tangent bundle.

Manifestly, $\V$ agrees with the standard complex structure on $\C^{n+1}$ to infinite order along $Z_c$. Furthermore, because  $\bT^{0,1} M$ is involutive near the origin, it follows that its pushforward $\V$ by the polar coordinate change $\widetilde g$ is likewise involutive near the origin of some sector  $\Omega_\theta = \{ ( z_0,z_1,\ldots,z_n) : |\Arg(z_0)|<\theta\}$ where $0 < \theta<\frac{\pi}{2}$. Thus, we may apply Proposition~\ref{sector} and obtain an open neighbourhood $U_1 \subseteq \C^{n+1}$ of the origin on which is defined a complex structure $\V_1$ agreeing with $\V$ on $U_1 \cap \Omega_{\theta/2}$.

By an application of the classical Newlander-Nirenberg theorem, there is an open neighbourhood $U_2 \subseteq U_1$ of $0$ and smooth functions $h_0,\ldots,h_n : U_2 \to \C$ that are holomorphic for the complex structure $\V_1$  and furthermore satisfy $h_j(0)=0$, $\partial_{x_j} h_k(0)= \delta_{j,k}$ for $j,k \in \{0,1,\ldots,n\}$. In fact, again using the vanishing of the deformation terms along $Z_c$, the complex structure $\V_1$  on $U_1$  is such that $Z_c  \cap U_1$ sits as a complex submanifold. Indeed, the inherited complex structure  on $Z_c $ is its standard one spanned by $\partial_{\overline z_1},\ldots,\partial_{\overline z_n}$. Using the implicit function theorem for several complex variables, complex hypersurfaces can locally be written as preimages of regular values of holomorphic functions. Thus, possibly shrinking $U_2$ around the origin, we may also choose $h_0$ to vanish on $Z_c $. Lastly, recalling that the complex structure $\V_1$ agrees with $\V$ on $U_1 \cap \Omega_{\theta/2}$, we put 
\begin{align*}
U \coloneqq (\widetilde g \,)^{-1}(U_2 \cap \Omega_{\theta/2}) && f_j \coloneqq h_j \circ \widetilde g && j=0,\ldots,n
\end{align*}
to complete the proof.
\end{proof}

\section{The \texorpdfstring{$b$}{b}-Newlander-Nirenberg theorem: general case}\label{sec:main}

This section is devoted to the proof of our main result Theorem~\ref{mainthm}. In light of Mendoza's Theorem~\ref{Mendoza}, we may begin on relative neighbourhood of the origin $M \subseteq  \R^{2n+2}_+$   equipped with a complex $b$-structure ${\bT^{0,1}}M$ spanned by: 
\begin{align*}
L_0 &\coloneqq \bdel_{\overline z} + \Gamma_0 \\
L_j &\coloneqq \partial_{\overline z_j} + \Gamma_j && j=1,\ldots,n
\end{align*} 
where   $\Gamma_0,\ldots,\Gamma_n$ are smooth  complex vector fields on $M$ that vanish to infinite order on $Z_r = \{0\}\times\R^{2n+1}$. 

By Theorem~\ref{exist}, there is an open set $U \subseteq M$ containing $0$ and smooth functions $f_0,\ldots,f_n : U \to \C$ satisfying:
\begin{enumerate}[(i)]
\item $L_j f_k = 0$ for $j,k\in\{0,1,\ldots,n\}$ ($b$-holomorphicity),
\item $f_0$ vanishes on $Z_r = \{0\}\times\R^{2n+1}$ and $f_j(0)=0$ for $j=1,\ldots,n$,
\item $\partial_{x_j} f_k(0)= \delta_{j,k}$ for $j,k \in \{0,1,\ldots,n\}$.
\end{enumerate}
As in Section~\ref{sec:2main}, we decompose $f_0$ into real and imaginary parts $f_0=u+iv$. Since $u$ and $v$ vanish on $Z_r$, we may write $u=x_0 \cdot a$ and $v=x_0 \cdot b$, where $a$ and $b$ are real-valued smooth functions. As in Section~\ref{sec:2main}, by shrinking $U$, we may enforce that $a$ is nowhere vanishing on $U$ and that: 
\begin{enumerate}[(a)]
\item $F= (u, \tfrac{b}{a}, f_1,\ldots,f_n)$ is a diffeomorphism from $U$ onto an open set $F(U) \subseteq \R^{2n+2}_+$,
\item $F(0)=0$ and $F(U\cap Z_r) = F(U)\cap Z_r$,
\item $f = \widetilde \kappa \circ F$ where $f=(f_0,f_1,\ldots,f_n)$ and $\widetilde \kappa = \kappa \times \mathrm{id} : \R^{2n+2}\to\R^{2n+2}$ where $\kappa:\R^2 \to \R^2$ is $\kappa(x,y)=(x,xy)$.
\end{enumerate}
As in Section~\ref{sec:2main}, we argue that the pushforward by $F$ of the complex $b$-structure on $U$ spanned by $L_0,\ldots,L_n$ is independent of the deformation terms $\Gamma_0,\ldots,\Gamma_n$. Again, by Proposition~\ref{restriso},  it suffices check this away from the boundary $\{0\}\times\R^{2n+1}$. Using the fact that  $\widetilde \kappa$ restricts to a self-diffeomorphism of $\R^{2n+2} \setminus Z_r$, we have that $f$ defines a diffeomorphism of $U \setminus Z_r$ onto $\kappa(F(U\setminus Z_r))$. Because the components of $f$ are holomorphic for the induced complex structure on $U \setminus Z_r$, Proposition~\ref{holocharts} implies that $f|_{U\setminus Z_r}$ pushes forward the subbundle  spanned by $L_0,\ldots,L_n$ to the subbundle spanned by $\partial_{\overline z_0},\ldots,\partial_{\overline z_n}$. Thus, the pushforward by $F|_{U\setminus Z_r}$ of the subbundle spanned by $L_0,\ldots,L_n$  is completely determined to be the pullback of the standard complex structure on $\kappa(F(U\setminus Z_r))$ by $\kappa$. Running the same argument in the case where the deformation terms are all zero, we obtain the desired coordinate change.

\section{Remarks on singular pushforwards}\label{generalpush}

The polar coordinate change $g:\R^2 \to \C$,  $g(x,y)=xe^{iy}$
played a crucial role in this article by  enabling us to relate the nonelliptic operator $\bdel_z = \frac{1}{2}(x\partial_x-i\partial_y)$ to the standard complex partial derivative operator $\partial_z = \frac{1}{2}(\partial_x-i\partial_y)$ by way of the elementary, but perhaps somewhat mysterious, pushforward formula $g_*( \bdel_{z}) = z \partial_z$. In this section, we shed additional light on this pushforward formula by placing it in a more general context. The material in this section  will also play a role in planned future work on the Newlander-Nirenberg theorem for complex $b^k$-manifolds \cite{Barron-Francis}.

Suppose that, more generally, we wish to relate the  vector field
\begin{align*}
\tfrac{1}{2}( x^k \partial_x  - i \partial_y ) 
\end{align*}
to $\partial_z$, for $k$ a positive integer. More generally still,  suppose we wish to relate 
\begin{align*}
\tfrac{1}{2}( f(x) \partial_x - i \partial_y)
\end{align*}
to $\partial_z$, where $f(z)$ is an entire function on $\C$ whose restriction $f(x)$ to $\R$ is real-valued.

We first recall a few more-or-less notational points concerning holomorphic vector fields and imaginary-time flows. One may refer to \cite{Forstneric}, p.~39 for additional details. Let $f$ be a holomorphic function on $\C$ so that $V \coloneqq  f(z)\partial_z$ is a holomorphic vector field on $\C$. We  may decompose $V$ as 
\[ V = \tfrac{1}{2}(X-iJX), \]
where the real vector fields $
X =\mathrm{Re}(f)\partial_x + \mathrm{Im}(f)\partial_y$ and $ JX = -\mathrm{Im}(f) \partial_x + \mathrm{Re}(f)\partial_y$ commute with one another. We may then define the complex-time  flow of $V$ by\begin{align*}
\phi^V_w(z) \coloneqq \phi^X_s \circ \phi^{JX}_t (z) && \text{where }w=s+it,
\end{align*}
with the usual caveat that the flow need only be defined for $w$ sufficiently close to $0$.

This flow is jointly holomorphic in $w$ and $z$. One practical consequence of this joint holomorphicity is that, if $f(z)$ is real on the $x$-axis, so that $X$ coincides with  $f(x) \partial_x$ on the $x$-axis, then the complex-time flow of the holomorphic vector field $f(z)\partial_z$ may be obtained by analytic continuation in both variables of the real-time flow of the one-dimensional vector field $f(x)\partial_x$. 

\begin{ex}
The real-time flow of the one-dimensional vector field $X_1 \coloneqq  x \partial_x$ is given by $\phi^{X_1}_t(x) = e^tx$. Accordingly, the complex-time flow of the holomorphic vector field $V_1 \coloneqq z \partial_z$ is given by $\phi^{V_1}_w(z) = e^wz$. 
\end{ex}
\begin{ex}
The real-time flow of the one-dimensional vector field $X_2 \coloneqq x^2 \partial_x$ is given by $\phi^{X_2}_t(x) = \frac{x}{1-tx}$.  Accordingly, the complex-time flow of the holomorphic vector field $V_2 \coloneqq z^2 \partial_z$ is given by $\phi^{V_2}_w(z) = \frac{z}{1-wz}$.
\end{ex}
\begin{ex}
For any integer $k \geq 2$, the real-time flow of the one-dimensional vector field $X_k \coloneqq x^k \partial_x$ is given by $\phi^{X_k}_t(x) = \frac{x}{\sqrt[k-1]{1-(k-1)tx^{k-1}}}$. Accordingly, the complex-time flow of the holomorphic vector field $V_k \coloneqq z^k \partial_z$  is given by $\phi^{V_k}_w(z) = \frac{z}{\sqrt[k-1]{1-(k-1)wz^{k-1}}}$.
\end{ex}

Bearing in mind the above notations, we now give the main result of this section.

\begin{propn}
Let $f$ be a holomorphic function on $\C$ that is real-valued on $\R$. Let $U \subseteq \R^2$ be a sufficiently small  open neighbourhood of  $\R \times \{0\}$ that $h:U \to \C$, $h(x,y) = \phi^{f(z)\partial_z}_{iy}(x)$ is defined. Then, the vector field  $\frac{1}{2}(f(x)\partial_x - i \partial_y)$ on $U$ is $h$-related to the holomorphic vector field $f(z)\partial_z$ on $\C$.  
\end{propn}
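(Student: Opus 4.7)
The plan is to factor $h$ through the joint complex flow of $V = f(z)\partial_z$, reducing $h$-relatedness to an ODE uniqueness argument. First I would introduce
\[
\Phi(w,z) \coloneq \phi^V_w(z),
\]
characterized as the unique solution to the complex ODE $\partial_w\Phi(w,z) = f(\Phi(w,z))$ with $\Phi(0,z) = z$. Since $f$ is entire, classical holomorphic dependence of ODE solutions on initial data and parameters guarantees that $\Phi$ is jointly holomorphic in $(w,z)$ on a neighborhood of $\{0\}\times\C$ in $\C^2$; analytic continuation from real time identifies this solution with the complex-time flow of the paper. In particular $h(x,y) = \Phi(iy, x)$, so derivatives of $h$ reduce to holomorphic partials of $\Phi$ via the chain rule.

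Next I would extract two derivative identities for $h$. Holomorphicity in $w$ combined with the flow equation yields
\[
\partial_y h(x,y) \;=\; i\,\partial_w\Phi(iy,x) \;=\; i\,f(h(x,y)).
\]
For the $x$-derivative, I would argue variationally: the functions $w \mapsto \partial_z\Phi(w,z)$ and $w \mapsto f(\Phi(w,z))$ (with $z$ fixed) satisfy the same linear ODE $\partial_w A = f'(\Phi)\cdot A$, with initial values $1$ and $f(z)$ at $w=0$, respectively. Uniqueness of ODE solutions then yields the key identity
\[
f(z)\,\partial_z\Phi(w,z) \;=\; f(\Phi(w,z))
\]
globally on the domain of $\Phi$; specialization to $(w,z)=(iy,x)$ gives $f(x)\,\partial_x h(x,y) = f(h(x,y))$.

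Finally, $h$-relatedness may be verified by testing the pushforward against the coordinate functions $z$ and $\overline z$ on the target $\C$. This reduces the proposition to a pair of scalar equations relating $W(h)$ and $W(\overline h)$ to $f(h)$ and $\overline{f(h)}$, where $W$ denotes the vector field in the statement. Substituting the two derivative identities above into the definition of $W$ collapses the right sides via the cancellation $i\cdot i\,f(h) = -f(h)$; consistency with Lemma~\ref{pushforwards} in the model case $f(z)=z$, $h=g$ pins down the signs on the $i\,\partial_y$ term, and real-valuedness of $f$ on $\R$ handles the antiholomorphic equation.

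The principal technical step is the variational identity $f(z)\partial_z\Phi = f(\Phi)$: the natural ``separation of variables'' derivation via a primitive of $1/f$ fails at zeros of $f$, whereas the ODE uniqueness argument works globally on the domain of $\Phi$. Once this identity is in hand, the proposition follows by direct substitution and chain-rule bookkeeping.
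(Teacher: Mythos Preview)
Your argument is correct and reaches the same conclusion, but by a genuinely different route than the paper's proof. The paper never writes down the joint holomorphic flow $\Phi(w,z)$ or the variational identity $f(z)\,\partial_z\Phi(w,z)=f(\Phi(w,z))$. Instead, it uses the real decomposition $V=\tfrac{1}{2}(X-iJX)$ set up just before the proposition and argues entirely via composition of real flows: from $h(x,y)=\phi^{JX}_y(x)$ one gets $\partial_y$ $h$-related to $JX$ immediately, and the commutation $\phi^X_t\circ\phi^{JX}_y=\phi^{JX}_y\circ\phi^X_t$ together with the observation that $X$ restricts to $f(x)\partial_x$ on the real axis (here is where real-valuedness of $f$ on $\R$ enters for the paper) gives $f(x)\partial_x$ $h$-related to $X$; linearity finishes. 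Your ODE-uniqueness proof of $f(z)\,\partial_z\Phi=f(\Phi)$ is essentially the infinitesimal form of the paper's flow-commutation step (both encode $(\phi^V_w)_*V=V$), so the two arguments are equivalent in content though different in packaging. The paper's version is shorter and exploits the $X,JX$ framework already on the page; your version is more self-contained, avoids splitting into real and imaginary parts, and the linearized-ODE trick for the variational identity sidesteps any temptation to divide by $f$. One caution: your final paragraph defers the actual sign bookkeeping to ``consistency with Lemma~\ref{pushforwards}''; it would be cleaner to simply carry out the substitution $W(h)=\tfrac{1}{2}\big(f(h)+i\cdot if(h)\big)$ and $W(\overline h)=\tfrac{1}{2}\big(\overline{f(h)}+\overline{f(h)}\big)$ explicitly rather than invoking the model case as a calibration.
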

\begin{proof}
Decompose $V=f(z)\partial_z$ in the form $V = \frac{1}{2}(X-iJX)$, as above. Thus, by definition, $h(x,y)=\phi^{JX}_y(x)$. For fixed $(x,y)\in U$, taking $t$ sufficiently small, we have
\[ h(x,y+t) = \phi^{JX}_{y+t}(x) = \phi^{JX}_t (h(x,y)). \]
Thus, the vector field $\partial_y$ is $h$-related to $JX$. Similarly, 
\[ \phi^X_t h(x,y) = \phi^X_t  \phi^{JX}_y(x) = \phi^{JX}_y \phi^X_t(x) =  \phi^{JX}_y \phi^{f(x)\partial_x}_t(x) = h( \phi^{f(x)\partial_x}_t(x),y) = h(\phi^{f(x)\partial_x}_t(x,y)). \]
Thus, the vector field $f(x) \partial_x$ is $h$-related to $X$. The result follows by linearity.
\end{proof}

\begin{cor}
Let $g_2:\R^2 \to \C$, $g_2(x,y)=\frac{x}{1-ixy}$. Then, $\frac{1}{2}(x^2\partial_x-i\partial_y)$ is $g_2$-related to $z^2 \partial_z$. \end{cor}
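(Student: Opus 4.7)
The plan is to apply the preceding proposition directly to the choice $f(z) = z^2$. This is a legitimate input for the proposition: $z^2$ is entire on $\C$, and its restriction $x^2$ to the real line is real-valued. So the proposition, applied with this $f$, asserts that the vector field $\frac{1}{2}(x^2 \partial_x + i \partial_y)$ on a suitable neighbourhood of $\R\times\{0\}$ is $h$-related to $z^2 \partial_z$, where
\[ h(x,y) = \phi^{z^2 \partial_z}_{iy}(x). \]

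The remaining step is therefore to identify $h$ with $g_2$. But the complex-time flow of $V_2 = z^2 \partial_z$ was already computed in one of the examples immediately preceding the corollary:
\[ \phi^{V_2}_w(z) = \frac{z}{1-wz}. \]
Setting $w = iy$ and $z = x$ gives
\[ h(x,y) = \phi^{V_2}_{iy}(x) = \frac{x}{1 - (iy) x} = \frac{x}{1-ixy} = g_2(x,y), \]
valid on any neighbourhood of $\R \times \{0\}$ small enough that the denominator does not vanish. Hence $h=g_2$, and the conclusion of the proposition is exactly the statement of the corollary.

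There is no real obstacle here; the work has all been done by the proposition and the third worked example, and the only thing to check is that the explicit formula for $\phi^{V_2}_w$ specializes to $g_2$ when $w = iy$, which is immediate. One might add a brief parenthetical noting the domain restriction (e.g.\ $|xy|<1$ suffices to make $g_2$ well-defined and smooth) so that the neighbourhood $U$ in the proposition can be taken to be this explicit open set around $\R \times \{0\}$.
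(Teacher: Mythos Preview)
Your proof is correct and is exactly the argument the paper intends: the corollary is stated without proof because it follows immediately from the proposition applied to $f(z)=z^2$ together with the example computing $\phi^{V_2}_w(z)=\frac{z}{1-wz}$. One small remark: since $1-ixy$ has nonzero real part for all real $x,y$, the map $g_2$ is in fact defined on all of $\R^2$, so the domain caveat you suggest is unnecessary here.
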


\begin{cor}
Fix an integer $k \geq 2$. Let $g_k:\R^2 \to \C$,  $g_k(x,y)=\frac{x}{\sqrt[k-1]{1-i(k-1)x^{k-1}y}}$. Then  $\tfrac{1}{2}(x^k\partial_x - i \partial_y )$ is $g_k$-related to $z^k \partial_z$.
\end{cor}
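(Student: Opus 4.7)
The plan is to apply the preceding proposition with the choice $f(z) = z^k$. This $f$ is entire and $f|_{\R}$ is real-valued, so the hypothesis is met; it remains only to verify that the map $h(x,y) = \phi^{f(z)\partial_z}_{iy}(x)$ produced by the proposition coincides with the $g_k$ appearing in the corollary.

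From the preceding example, the complex-time flow of $V_k = z^k\partial_z$ is
\[
\phi^{V_k}_w(z) = \frac{z}{\sqrt[k-1]{1-(k-1)wz^{k-1}}},
\]
so evaluating at $w = iy$ and $z = x$ immediately gives $h(x,y) = g_k(x,y)$. The conclusion that $\tfrac{1}{2}(x^k\partial_x - i\partial_y)$ is $g_k$-related to $z^k\partial_z$ then follows from the proposition, taking the linear combination on the source side with $-i$ in front of $\partial_y$, which produces $V_k = \tfrac{1}{2}(X - iJX)$ on the target side in the notation of the proposition's proof.

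The only delicate point is ensuring $g_k$ is smoothly defined on a neighbourhood of $\R \times \{0\}$ in $\R^2$, so that the underlying complex-time flow actually converges there. Near $y = 0$ the radicand $1 - i(k-1)x^{k-1}y$ stays close to $1$, so choosing the principal branch of the $(k-1)$-th root renders $g_k$ smooth on a tubular neighbourhood of the $x$-axis; I do not anticipate any further obstacle, as the proposition then supplies the $g_k$-relatedness directly.
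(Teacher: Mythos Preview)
Your proposal is correct and is precisely the intended argument: the paper states this corollary without proof, as an immediate application of the preceding proposition with $f(z)=z^k$ together with the example computing the complex-time flow of $z^k\partial_z$. One small strengthening: rather than only arguing smoothness near $y=0$, observe (as the paper notes just after the corollary) that the radicand $1-i(k-1)x^{k-1}y$ always has real part $1$, so the principal $(k-1)$th root is defined on all of $\R^2$, matching the domain asserted for $g_k$.
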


We remark that $g_k(x,y)=\frac{x}{\sqrt[k-1]{1-i(k-1)x^{k-1}y}}$ collapses $Z_r=\{0\} \times \R$ to $\{0\}$ and defines a diffeomorphism $\R^2 \setminus Z_r \to \{ re^{i\theta} \in \C : r \neq 0, -\frac{\pi}{2(k-1)} < \theta <\frac{\pi}{2(k-1)}\}$. Note that the line $\mathrm{Re}(z)=1$ is completely contained in the principal branch of $\sqrt[k-1]{z}$.  For purposes of clarification, sample plots of the real vector field $JX$ in the decomposition $z^k \partial_z = \frac{1}{2}(X-iJX)$ are given below. Note all integral curves passing through the $x$-axis are complete.

\begin{figure}
    \centering
    \includegraphics[width=0.3\linewidth]{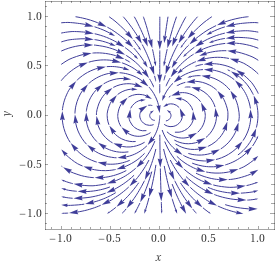}
    \includegraphics[width=0.3\linewidth]{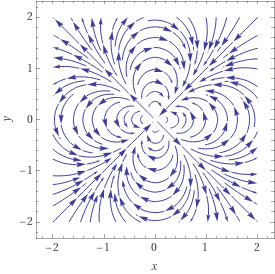}
    \includegraphics[width=0.3\linewidth]{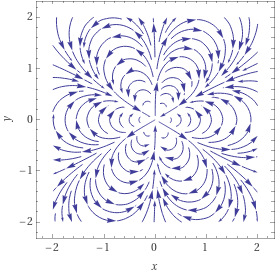}
    \caption{The vector field $JX = -\mathrm{Im}(z^k) \partial_x + i \mathrm{Re}(z^k) \partial_y$ for $k=2,3,4$.}
    \label{fig:enter-label}
\end{figure}

\bibliographystyle{abbrv}
\bibliography{V11bib}

\end{document}